\documentclass[11pt]{amsart}
\usepackage{amssymb}

\newtheorem{theorem}{Theorem}[section]
\newtheorem{corollary}[theorem]{Corollary}
\newtheorem{lemma}[theorem]{Lemma}
\newtheorem{proposition}[theorem]{Proposition}
\theoremstyle{definition}
\newtheorem{definition}[theorem]{Definition}

\newtheorem{question}[theorem]{Question}
\theoremstyle{remark}
\newtheorem{remark}[theorem]{Remark}
\newtheorem{example}[theorem]{Example}

\newtheorem{claim}{\bf Claim}

\numberwithin{equation}{section}

\def\R {{\mathbb{R}}}
\def\T {{\mathbb{T}}}
\def\Q {{\mathbb{Q}}}

\def\N{{\mathbb{N}}}

\def\i{{\rm i}}

\def\Z {{\mathbb{Z}}}
\def\diam{{\mathrm{diam}\,}}

\def\3{{|\!|\!|}}

\begin{document}

\title[On equivalence relations induced by LCA Polish groups]{On equivalence relations induced by locally compact abelian Polish groups}
\author{Longyun Ding}
\address{School of Mathematical Sciences and LPMC, Nankai University, Tianjin, 300071, P.R.China}
\email{dingly@nankai.edu.cn}
\thanks{Research is partially supported by the National Natural Science Foundation of China (Grant No. 11725103).}
\author{Yang Zheng}
\email{1120200015@mail.nankai.edu.cn}

\subjclass[2010]{Primary 03E15, 22B05, 20K45}
\keywords{Borel reduction, locally compact abelian Polish group, equivalence relation}

\begin{abstract}
Given a Polish group $G$, let $E(G)$ be the right coset equivalence relation $G^\omega/c(G)$, where $c(G)$ is the group of all convergent sequences in $G$. The connected component of the identity of a Polish group $G$ is denoted by $G_0$.

Let $G,H$ be locally compact abelian Polish groups. If $E(G)\leq_B E(H)$, then there is a continuous homomorphism $S:G_0\rightarrow H_0$ such that $\ker(S)$ is non-archimedean. The converse is also true when $G$ is connected and compact.

For $n\in\N^+$, the partially ordered set $P(\omega)/\mbox{Fin}$ can be embedded into Borel equivalence relations between $E(\R^n)$ and $E(\T^n)$.
\end{abstract}
\maketitle

\section{Introduction}

A topological space is {\it Polish} if it is separable and completely metrizable. For more details in descriptive set theory, we refer to~\cite{KEchris}. It is an important application of descriptive set theory to study equivalence relations by using Borel reducibility.
Given two Borel equivalence relations $E$ and $F$ on Polish spaces $X$ and $Y$ respectively, recall that $E$ is {\it Borel reducible} to $F$, denoted $E\leq_B F$, if there exists a Borel map $\theta:X\rightarrow Y$ such that for all $x,y\in X$,
$$xEy\Longleftrightarrow\theta(x)F\theta(y).$$
We denote $E\sim_B F$ if both $E\leq_B F$ and $F\leq_B E$, and denote $E<_B F$ if $E\leq_B F$ and $F\nleq_B E$. We refer to~\cite{gao} for background on Borel reducibility.

Polish groups are important tools in the research on Borel reducibility. A topological group is {\it Polish} if its topology is Polish. For a Polish group $G$, the authors~\cite{DZ} defined an equivalence relation $E(G)$ on $G^\omega$ by
$$xE(G)y\iff\lim_nx(n)y(n)^{-1}\mbox{ converges in }G$$
for $x,y\in G^\omega$. We say that $E(G)$ is the {\it equivalence relation induced by} $G$. Indeed, $E(G)$ is the right coset equivalence relation $G^\omega/c(G)$, where $c(G)$ is the group of all convergent sequences in $G$.

In this article, we focus on equivalence relations induced by locally compact abelian Polish groups. Some interesting results have been found in some special cases. For instance, for $c_0,e_0,c_1,e_1\in\N$, $E(\R^{c_0}\times\T^{e_0})\le_BE(\R^{c_1}\times\T^{e_1})$ iff $e_0\le e_1$ and $c_0+e_0\le c_1+e_1$ (cf.~\cite[Theorem 6.19]{DZ}).

Given a group $G$, the identity element of $G$ is denoted by $1_G$. If $G$ is a topological group, the connected component of $1_G$ in $G$ is denoted by $G_0$. Recall that a Polish group $G$ is {\it non-archimedean} if it has a neighborhood base of $1_G$ consisting of open subgroups.

\begin{theorem}
Let $G$ and $H$ be two locally compact abelian Polish groups.
If $E(G)\leq_B E(H)$, then there is a continuous homomorphism $S:G_0\rightarrow H_0$ such that $\ker(S)$ is non-archimedean.
\end{theorem}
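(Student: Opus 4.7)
The plan is to build the continuous homomorphism $S:G_0\to H_0$ from the Borel reduction $\theta:G^\omega\to H^\omega$ by combining the Pontryagin--van Kampen structure theorem with an automatic-continuity argument on Polishable groups, and then to constrain $\ker(S)$ using the classification of $E(\R^c\times\T^e)$ recalled in the introduction.

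First I would decompose $G_0\cong\R^n\times C_G$ and $H_0\cong\R^m\times C_H$ via the structure theorem, where $C_G$ and $C_H$ are connected compact abelian Polish groups. In the LCA Polish setting non-archimedean is equivalent to totally disconnected for closed subgroups, so the goal becomes producing a continuous $S$ whose kernel has trivial connected component.

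For the construction, recall that $c(G)$ is a Polishable Borel subgroup of $G^\omega$ with $E(G)$ as its coset equivalence relation, and similarly for $H$. For each $g\in G_0$ I would assign a canonical ``divergent test sequence'' $x_g\in G_0^\omega$ encoding the direction of $g$ (for instance $x_g(n)=n\cdot g$ on the $\R^n$-factor, with analogous slow-divergence lifts along one-parameter subgroups of $C_G$), arranged so that $g\mapsto x_g$ is itself a continuous group homomorphism into $G^\omega$. Composing with $\theta$ and reducing modulo $c(H)$ would then yield a Borel homomorphism from the Polish group $G_0$ into the Polishable quotient $H^\omega/c(H)$; automatic-continuity theorems of Pettis/Banach type promote this to a continuous homomorphism, from which $S:G_0\to H_0$ is extracted by projection through the $\R^m\times C_H$ decomposition of $H$.

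To see that $\ker(S)$ is non-archimedean, suppose toward contradiction that it contained a nontrivial connected subgroup $K\subseteq G_0$. By the structure theorem $K$ would contain a copy of $\R$ or of $\T$, and restricting $\theta$ to $K^\omega$ would effectively reduce $E(K)$ (which dominates $E(\R)$ or $E(\T)$) into the identity relation on $H^\omega/c(H)$, contradicting the classification $E(\R^{c_0}\times\T^{e_0})\le_B E(\R^{c_1}\times\T^{e_1})\iff e_0\le e_1\text{ and }c_0+e_0\le c_1+e_1$, since neither $E(\R)$ nor $E(\T)$ is smooth. The principal obstacle I anticipate is precisely the construction step: choosing the test sequences $x_g$ canonically enough that $g\mapsto x_g$ is a continuous homomorphism (so additivity transfers through $\theta$ modulo $c(H)$), while fine enough that the resulting $S$ detects every connected direction in $G_0$ and not just, say, the $\R^n$-factor.
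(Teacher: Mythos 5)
Your outline correctly identifies the shape of the conclusion, but the central step --- producing a \emph{homomorphism} $S$ from the Borel reduction $\theta$ --- is assumed rather than proved, and this is precisely where all the work lies. Composing a homomorphic assignment $g\mapsto x_g\in G^\omega$ with $\theta$ and passing to $H^\omega/c(H)$ gives only a Borel \emph{map}, not a Borel homomorphism: a Borel reduction between coset equivalence relations carries no algebraic structure, so there is nothing for Pettis/Banach automatic continuity to upgrade. (You flag the construction of $x_g$ as the anticipated obstacle, but that part is easy; the obstacle is that $\theta$ destroys additivity.) The paper instead invokes \cite[Theorem 6.13]{DZ} (Lemma 2.4), which extracts from $\theta$ a continuous map $S$ on an open subgroup $G_c\supseteq G_0$ satisfying the biconditional $xE_*(G_c)y\iff S(x)E_*(H)S(y)$ for sequences with $x(n)y(n)^{-1}\to 1_G$, and then proves additivity of $S$ by hand: writing $H_0\subseteq\R^\omega\times\T^\omega$, lifting each coordinate of $t\mapsto S(f(a^l(t)))$ to real-valued functions $F^l_j$, and running the midpoint identity of \cite[Lemma 6.17]{DZ} to get $S(g_0)S(g_1)=S(g_0g_1)$ on the dense solenoidal image $f(\R^m)$, whence on all of $G_0$ by continuity. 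Your proposal contains no substitute for this step.

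The kernel argument also has two problems. First, your claim that a nontrivial connected closed subgroup $K$ of an LCA group must contain a copy of $\R$ or of $\T$ is false: a solenoid $\Sigma_P$ is compact, connected, nontrivial, and contains neither (every proper closed subgroup of $\Sigma_P$ is totally disconnected, and $\Sigma_P\not\cong\T$). Second, $K\subseteq\ker(S)$ is a statement about $S$, not about $\theta$; it does not follow that $\theta\restriction K^\omega$ reduces $E(K)$ to the identity relation, so the appeal to non-smoothness of $E(\R)$ has an unjustified premise. The paper's contradiction runs through the biconditional above: assuming $\ker(S)$ is not non-archimedean, one finds a neighborhood $V_{k_0}$ of the identity in $\ker(S)$ containing no open subgroup, and builds a sequence $x(n)\to 1_G$ in $\ker(S)$ whose partial products leave $V_{k_0}$ infinitely often; then $xE_*(G)1$ fails while $S(x)$ is the constant identity sequence, so $S(x)E_*(H)S(1)$ holds trivially --- contradicting the biconditional. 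Note this argument never mentions connected subgroups at all, and it is the link between $S$ and the reduction (not the classification of $E(\R^c\times\T^e)$) that drives it.
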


By restricting attention to compact connected abelian Polish groups, we prove the following theorem.

\begin{theorem}[Rigid Theorem]
Let $G$ be a compact connected abelian Polish group and $H$ a locally compact abelian Polish group. Then $E(G)\leq_B E(H)$ iff there is a continuous homomorphism $S:G\rightarrow H$ such that $\ker(S)$ is non-archimedean.
\end{theorem}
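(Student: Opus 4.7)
The forward direction is an immediate specialization of Theorem 1.1: since $G$ is connected, $G_0 = G$, and the theorem supplies a continuous homomorphism $S : G \to H_0 \subseteq H$ with $\ker S$ non-archimedean.

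For the converse, assume $S : G \to H$ is continuous with $N := \ker S$ non-archimedean. Because $G$ is compact, $S(G)$ is a closed subgroup of $H$, and the pointwise inclusion $S(G)^\omega \hookrightarrow H^\omega$ already witnesses $E(S(G)) \leq_B E(H)$. The first isomorphism theorem identifies $S(G)$ with $G/N$ as Polish groups, so the whole problem reduces to establishing
\[
E(G) \leq_B E(G/N)
\]
for a compact connected abelian Polish group $G$ and a closed non-archimedean subgroup $N \leq G$.

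My plan exploits the profinite structure of $N$. Being compact and non-archimedean, $N$ admits a decreasing chain of clopen subgroups $N = U_0 \supseteq U_1 \supseteq \cdots$ with $\bigcap_i U_i = \{0\}$ and $N = \varprojlim_i N/U_i$, each $N/U_i$ finite. Each $U_i$ is closed in $G$, so the quotients $G/U_i$ are compact connected abelian Polish groups and $G = \varprojlim_i G/U_i$; consequently $x(k) - y(k)$ converges in $G$ iff $(x(k) - y(k)) + U_i$ converges in $G/U_i$ for every $i$. On the dual side, $\widehat{G/U_i}$ is torsion-free and contains $\widehat{G/N}$ as a subgroup of finite index $|N/U_i|$. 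In the case where $\widehat{G/N}$ is finitely generated (i.e.\ $G/N$ is a torus), this forces topological isomorphisms $\psi_i : G/U_i \to G/N$, and the compositions $\Psi_i := \psi_i \circ q_i : G \to G/N$, with $q_i$ the quotient by $U_i$, are continuous with $\ker \Psi_i = U_i$. The reduction $\theta$ is to be assembled from the family $(\Psi_i)$ so that convergence of $\theta(x) - \theta(y)$ in $G/N$ captures convergence of $\Psi_i(x(k) - y(k))$ for every $i$; because $\bigcap_i \ker \Psi_i = \bigcap_i U_i = \{0\}$, this forces $x-y$ to converge in $G$, and continuity of each $\Psi_i$ handles the forward implication.

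The principal obstacle is twofold. First, when $\widehat{G/N}$ is not finitely generated (e.g.\ a solenoidal quotient with $\widehat{G/N} = \Z[1/2]$), a finite-index torsion-free supergroup need not surject back onto $\widehat{G/N}$, so the injective homomorphisms $\psi_i : G/U_i \hookrightarrow G/N$ do not come for free and require a finer structural argument. Second, the maps $\Psi_i$ are not compatible with the tower---$\Psi_{i+1}$ does not factor through $\Psi_i$, and their targets sit differently inside $G/N$---so the ``assembly'' cannot be a naive interleaving or inverse limit. The construction of $\theta$ must therefore be a sequence-dependent Borel map that records all the $\Psi_i(x(k))$ coherently, using the profiniteness of $N$ to stabilize the encoding as $i\to\infty$. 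I expect the paper resolves both issues either through a structural decomposition of compact connected abelian Polish groups tailored to the tower $(U_i)$, or by a direct Borel construction in the spirit of Theorem 1.1, exploiting that the chain $U_i\downarrow\{0\}$ eventually separates any two distinct residue classes in $N$.
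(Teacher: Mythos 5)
Your forward direction and the first reductions of the converse match the paper exactly: $S(G)$ is compact hence closed in $H$, so $E(S(G))\le_B E(H)$, and $S(G)\cong G/\ker(S)$, so everything comes down to $E(G)\le_B E(S(G))$. But that last step is precisely where your proposal stops being a proof. You sketch a plan built on a clopen tower $U_0\supseteq U_1\supseteq\cdots$ in $N=\ker(S)$ with $\bigcap_iU_i=\{0\}$, you yourself point out that the maps $\Psi_i$ only exist (as maps onto $G/N$) when $G/N$ is a torus, and you never actually construct the Borel reduction $\theta$ -- the ``assembly'' of countably many conditions ``$\Psi_i(x(n)y(n)^{-1})$ converges in $G/U_i$'' into a single convergence condition in $G/N$ is exactly the hard part, and interleaving only yields a reduction to something like $E((G/N)^\omega)$, not to $E(G/N)$. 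Your closing sentence (``I expect the paper resolves both issues\dots'') is an admission that the core of the argument is missing.

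The paper does not build this reduction from scratch. It invokes Lemma~\ref{[0,1-embed]} (i.e.\ \cite[Theorem 6.5]{DZ}): for TSI Polish groups, if $\psi:G\to H$ is a continuous homomorphism onto a dense subgroup with non-archimedean kernel and the interval $[0,1]$ embeds into $H$, then $E(G)\le_B E(H)$. The only thing left to check is the hypothesis that $[0,1]$ embeds into $S(G)$, and this is where compact connectedness is used: $S(G)$ is compact connected metrizable, hence solenoidal, so there is a continuous homomorphism $f:\R\to S(G)$ with dense image whose kernel is a proper closed (hence discrete) subgroup of $\R$, and $f$ restricted to a small interval gives the embedding of $[0,1]$. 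So the missing ingredient in your write-up is twofold: the citation of the $[0,1]$-embedding lemma as the engine of the reduction, and the solenoidal argument that its hypothesis holds. Without one of these (or a genuinely new construction replacing the lemma, which you do not supply), the converse direction is not established.
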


For every normal space $X$, denoted by $\dim(X)$ the {\it covering dimension} of $X$, where $\dim(X)$ is an integer $\ge-1$ or the ``infinite number $\infty$''. Let $G$ be an abelian topological group. The topological group Hom$(G,\T)$ is called the {\it dual group} of $G$, denoted by $\widehat{G}$~(see Section 4). For any finite dimensional compact abelian Polish group $G$, $\dim(G)={\rm rank}(\widehat{G})$, the torsion-free rank of $\widehat G$~(cf. Lemma 8.13 and Corollary 8.26 of~\cite{HM13}). We say $G$ is $n$-{\it dimensional} if $\dim(G)=n$ for some integer $n$, or {\it infinite dimensional} if $\dim(G)$ is infinite.

Recall that $\T$ is the multiplicative group of all complex numbers with modulus $1$. For finite dimensional compact abelian Polish groups, we obtain the following results.

\begin{theorem}
Let $G,H$ be locally compact abelian Polish groups.
 \begin{enumerate}
    \item If $G$ is non-archimedean, then $E(G)\leq_B E_0^\omega$.
    \item If $G$ is not non-archimedean, then $E(\R)\leq_B E(G)$.
    \item If $G$ is not non-archimedean and $G_0$ is open, then $E(G)\sim_B E(G_0)$.
    \item If $n$ is a positive integer, then $E(\T^n)\leq_B E(G)$ iff $\T^n$ embeds in $G$.
    \item If $n$ is a positive integer and $G$ is compact, then $G$ is $n$-dimensional iff $E(\R^n)<_B E(G)\leq_B E(\T^n)$.
  \end{enumerate}
\end{theorem}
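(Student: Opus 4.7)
My plan is to treat the five parts in turn, using the Pontryagin structure theorem for LCA Polish groups, the reductions proven in~\cite{DZ} for products of $\R$ and $\T$, and the two preceding theorems of this section. For~(1), choose a decreasing neighborhood base $\{U_k\}_{k\in\omega}$ of $1_G$ by open subgroups; each $G/U_k$ is countable. Fix bijections $\varphi_k\colon G/U_k\to\N$ and define $\theta(x)(k,n)=\varphi_k(x(n)U_k)$. Since $G$ is abelian, $x(n)y(n)^{-1}\in U_k$ iff $x(n)U_k=y(n)U_k$, so $xE(G)y$ iff for each $k$ the slices $\theta(x)(k,\cdot)$ and $\theta(y)(k,\cdot)$ are eventually equal; this is precisely $\theta(x)\mathrel{E_0^\omega}\theta(y)$.

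For~(2), write $G\cong\R^m\times H$ with $H$ having a compact open subgroup $U$. If $m\geq 1$ the closed inclusion $\R\hookrightarrow G$ yields the reduction directly. If $m=0$ the hypothesis forces $U_0=G_0$ to be a \emph{nontrivial} compact connected abelian Polish group $K$, so it suffices to show $E(\R)\leq_B E(K)$ for every such $K$. A nonzero character $\chi\in\widehat K$ dualizes to a continuous surjection $\chi\colon K\twoheadrightarrow\T$; using a continuous local section of $\chi$ together with the reduction $E(\R)\leq_B E(\T)$ from~\cite{DZ}, I would build a Borel reduction from $E(\T)$ into $E(K)$ by lifting $\T$-trajectories through the local section and recording the sheet index within $\ker\chi$ by an $E_0$-hard auxiliary sequence. \emph{This step is the main technical obstacle}, because when $K$ is a solenoid $\T$ is not a closed subgroup of $K$, so the reduction cannot arise from a single embedding and must be assembled fibrewise.

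For~(3), the clopen inclusion $G_0\hookrightarrow G$ gives $E(G_0)\leq_B E(G)$. Conversely, $G/G_0$ is countable and discrete; pick a Borel section $\sigma\colon G/G_0\to G$ and decompose $x(n)=\sigma(q(x(n)))z(n)$ with $z(n)\in G_0$. Using item~(2) (which supplies $E_0\leq_B E(\R)\leq_B E(G_0)$), the $E_0$-class of the coset sequence $(q(x(n)))_n$ can be absorbed into an auxiliary $G_0$-valued stream to yield $E(G)\leq_B E(G_0)$. For~(4), $(\Leftarrow)$ is immediate by applying the closed embedding $\T^n\hookrightarrow G$ coordinatewise; $(\Rightarrow)$ invokes the Rigid Theorem with $\T^n$ in the role of ``$G$'', which produces a continuous $S\colon\T^n\to G$ with non-archimedean kernel---necessarily a zero-dimensional closed subgroup of the compact Lie group $\T^n$, hence finite---so $S(\T^n)\cong\T^n$ is a closed subgroup of $G$.

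For~(5), suppose $\dim G=n$. Then $\widehat{G_0}$ is torsion-free of rank~$n$; a rank-$n$ free subgroup dualizes to a continuous surjection $G_0\twoheadrightarrow\T^n$ with zero-dimensional kernel, and the Rigid Theorem yields $E(G_0)\leq_B E(\T^n)$. A decomposition argument analogous to item~(3) (with an appropriate profinite $G/G_0$-absorption step) gives $E(G)\leq_B E(\T^n)$; meanwhile $E(\R^n)\leq_B E(G)$ follows from~\cite{DZ} together with the $\T^n$-quotient structure of $G_0$, and strict inequality uses $E(\T^n)>_B E(\R^n)$ from~\cite{DZ}. For the converse, if $E(\R^n)<_B E(G)\leq_B E(\T^n)$, then Theorem~1.1 gives continuous homomorphisms $\R^n\to G_0$ and $G_0\to\T^n$ with non-archimedean kernels, and dimension analysis of these homomorphisms forces $\dim G_0=n=\dim G$.
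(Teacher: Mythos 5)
There are genuine gaps in parts (1), (2), and (5). In part (1), the map $\theta(x)(k,n)=\varphi_k(x(n)U_k)$ is not a reduction of $E(G)$: it detects the relation ``$\lim_n x(n)y(n)^{-1}=1_G$'', whereas $E(G)$ only requires that $x(n)y(n)^{-1}$ converge to \emph{some} element of $G$. Concretely, if $x(n)=g\neq 1_G$ and $y(n)=1_G$ for all $n$, then $xE(G)y$, yet $\theta(x)(k,\cdot)$ and $\theta(y)(k,\cdot)$ differ at every $n$ once $g\notin U_k$. One must pass to difference sequences (or simply cite~\cite[Theorem 3.5.(3)]{DZ}, as the paper does). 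In part (2), the step you flag as ``the main technical obstacle'' is not an obstacle but a false intermediate goal: you propose to produce a Borel reduction of $E(\T)$ into $E(K)$ for an arbitrary nontrivial compact connected abelian $K$, but $E(\T)\nleq_B E(\Sigma_P)$ for any solenoid $\Sigma_P$ (Lemma~\ref{between}, or part (4) combined with the fact that $\T$ does not embed in $\Sigma_P$). No fibrewise assembly can repair this. The paper's route never targets $\T$ inside $K$: it applies Lemma~\ref{[0,1-embed]} with $\psi=f:\R\to K$ a solenoidal map with dense image and $\varphi=\phi_{p_0}:K\to\T$ a coordinate character surjective on $f(\R)$, so that $\ker(\varphi\circ\psi)$ is a discrete subgroup of $\R$ and $[0,1]$ embeds in $K$; this gives $E(\R)\le_B E(K)$ directly.

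In part (5), the strictness $E(\R^n)<_B E(G)$ does not follow from $E(\R^n)<_B E(\T^n)$: you only know $E(G)\le_B E(\T^n)$, so non-reducibility of $E(\T^n)$ to $E(\R^n)$ says nothing about $E(G)$. The paper argues directly: if $E(G)\le_B E(\R^n)$, Theorem~\ref{lca} would give a continuous homomorphism $S:G_0\to\R^n$ with non-archimedean kernel, but $\R^n$ has no nontrivial compact connected subgroup, so $S(G_0)=\{0\}$ and $\ker(S)=G_0$ would not be non-archimedean. Separately, your upper bound $E(G)\le_B E(\T^n)$ routes through $G_0$ and a ``profinite $G/G_0$-absorption,'' but $G/G_0$ need not be countable when $G_0$ is not open (e.g.\ $G=\T\times(\Z/2\Z)^\omega$), so the mechanism of item (3) does not apply; the paper instead invokes the structure theorem for $G$ itself (a totally disconnected closed $\Delta\le G$ with $G/\Delta\cong\T^n$) together with Lemma~\ref{[0,1-embed]}. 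Parts (3) and (4) of your proposal are essentially correct and follow the paper's line.
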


Let $\mathcal{P}$ denote the set of all primes. For $P,Q\in\mathcal P^\omega$, $Q\preceq P$ means that there is a co-finite subset $A$ of $\omega$ and an injection $f:A\to\omega$ such that $Q(n)=P(f(n))$ for each $n\in A$.

For $P\in\mathcal{P}^\omega$, we consider the closed subgroup of $\T^\omega$, named $P$-{\it adic solenoid}, $\Sigma_P=\{g\in\T^\omega:\forall l\,(g(l)=g(l+1)^{P(l)})\}$ (cf.~\cite{Gu}).

\begin{theorem}
Let $P$ and $Q$ be in $\mathcal{P}^\omega$. Then $E(\Sigma_P)\leq_B E(\Sigma_Q)$ iff $Q\preceq P$.
\end{theorem}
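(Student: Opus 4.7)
The plan is to apply the Rigid Theorem to the compact connected group $\Sigma_P$, which reduces $E(\Sigma_P)\leq_B E(\Sigma_Q)$ to the existence of a continuous homomorphism $S:\Sigma_P\to\Sigma_Q$ with non-archimedean kernel, and then to pass through Pontryagin duality to convert this into an arithmetic condition on rank-one torsion-free subgroups of $\Q$. Identify $\widehat{\Sigma_P}$ with the subgroup $\Z[1/P]$ of $\Q$ generated by $\{1/\prod_{i<n}P(i):n\in\omega\}$, and similarly for $Q$. Continuous homomorphisms $S:\Sigma_P\to\Sigma_Q$ are in bijection with abstract group homomorphisms $\widehat S:\Z[1/Q]\to\Z[1/P]$, and $\widehat{\ker S}$ equals $\Z[1/P]/\mathrm{im}(\widehat S)$. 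Since $\ker S$ is a closed subgroup of a compact group, it is non-archimedean iff totally disconnected iff its Pontryagin dual is torsion; and because $\Z[1/P]$ has rank one, this cokernel is torsion whenever $\widehat S\neq 0$. If instead $\widehat S=0$, then $\ker S=\Sigma_P$ is connected and nontrivial, hence not non-archimedean. Thus the theorem reduces to the algebraic claim that a nonzero homomorphism $\widehat S:\Z[1/Q]\to\Z[1/P]$ exists iff $Q\preceq P$.

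For $(\Leftarrow)$, suppose $Q\preceq P$ via an injection $f:\omega\to\omega$ with $Q(n)=P(f(n))$ for $n\geq N$. Put $\alpha=\prod_{i<N}Q(i)\in\Z\subseteq\Z[1/P]$ and define $\widehat S(x)=\alpha x$. On the generator $1/\prod_{i<n}Q(i)$, the image is an integer for $n\leq N$ and equals $1/\prod_{N\leq i<n}P(f(i))$ for $n\geq N$; the latter lies in $\Z[1/P]$ because the injectivity of $f$ guarantees that its denominator divides some $\prod_{j<M}P(j)$. Hence $\widehat S$ is well-defined and nonzero. For $(\Rightarrow)$, suppose $\widehat S\neq 0$ and set $\alpha=\widehat S(1)\in\Z[1/P]\setminus\{0\}$; let $a_q=|\{i:Q(i)=q\}|$ and $b_q=|\{j:P(j)=q\}|$, viewed as elements of $\{0,1,2,\ldots,\infty\}$. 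The requirement $\widehat S(1/\prod_{i<n}Q(i))\in\Z[1/P]$ for every $n$ translates, prime by prime via $q$-adic valuations, into $a_q\leq b_q+v_q(\alpha)$ (with the right side read as $\infty$ when $b_q=\infty$). Since $v_q(\alpha)\neq 0$ for only finitely many primes $q$, this yields $b_q=\infty$ whenever $a_q=\infty$, and $a_q\leq b_q$ for all but finitely many $q$ (the exceptional $q$ all having $a_q<\infty$). An infinite Hall-type matching then builds the required $f:\omega\to\omega$: discard a finite prefix of $Q$ absorbing all exceptional primes to reduce to the case $a_q\leq b_q$ for every $q$, inject the $Q$-occurrences of each prime $q$ into the $P$-occurrences of $q$ and combine, and extend over the discarded prefix using indices not used by the tail matching.

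The conceptual core of the proof is packaged by the Rigid Theorem and Pontryagin duality; the main labor is in $(\Rightarrow)$. The most delicate point I anticipate is arranging the Hall matching on the tail of $Q$ so that it does not exhaust $\omega$, leaving enough unused indices to extend $f$ injectively over the discarded finite prefix---in the tight case where $a_q=b_q$ for cofinitely many primes $q$, a greedy matching could use up all of $\omega$, forcing one to choose a slightly non-greedy matching. Beyond this, everything is a routine manipulation of heights in rank-one subgroups of $\Q$.
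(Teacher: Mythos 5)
Your route differs from the paper's in its key ingredient. The paper also begins with the Rigid Theorem (so that $E(\Sigma_P)\le_B E(\Sigma_Q)$ becomes the existence of a continuous homomorphism $\Sigma_P\to\Sigma_Q$ with non-archimedean kernel, equivalently a nonzero one, since every nontrivial proper closed subgroup of a solenoid is totally disconnected), but it then closes the argument by citing a folklore lemma whose nontrivial implications rest on Prajs's theorem characterizing continuous surjections between solenoids and on Scheffer's theorem that based maps between compact connected abelian groups are homotopic to homomorphisms. You replace that citation chain with a direct Pontryagin-duality computation inside the rank-one subgroups $\Z[1/P],\Z[1/Q]\subseteq\Q$. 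Your reduction to ``a nonzero homomorphism $\Z[1/Q]\to\Z[1/P]$ exists'' is correct (and parallels what the paper itself does in Section 4), and your valuation-theoretic criterion --- $a_q\le b_q+v_q(\alpha)$ for all $q$, hence $b_q=\infty$ whenever $a_q=\infty$ and $a_q\le b_q$ for all but finitely many $q$ --- is right. This is more elementary and self-contained than the paper's argument.

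The gap is exactly at the point you flagged, and it cannot be repaired in the way you propose. Take $Q=(2,3,5,7,11,\dots)$ (all primes in increasing order) and $P=(3,5,7,11,\dots)$. Multiplication by $2$ carries $\Z[1/Q]$ isomorphically onto $\Z[1/P]$, so $\Sigma_P\cong\Sigma_Q$ and in particular a nonzero homomorphism exists. But any $f$ with $Q(n)=P(f(n))$ for all $n\ge M$ is forced to satisfy $f(n)=n-1$ there (each prime occurs exactly once in $P$), so $f$ maps $[M,\omega)$ onto $[M-1,\omega)$, and the $M$ remaining indices $0,\dots,M-1$ would have to be injected into the $M-1$ unused targets $0,\dots,M-2$ --- impossible. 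There is no ``non-greedy'' matching to fall back on, since the tail matching is unique. So if $Q\preceq P$ literally requires a \emph{total} injection $f:\omega\to\omega$, the equivalence you are proving (and the theorem as literally stated) is false. The resolution is that $\preceq$ must be read, as in the solenoid literature the paper cites, as requiring an injection defined only on a cofinite subset of $\omega$ (equivalently, the type ordering on the multiplicity sequences $(a_q)_q$ and $(b_q)_q$). With that reading your ``delicate point'' evaporates: discard finitely many indices of $Q$ absorbing the exceptional primes, then match the remaining occurrences of each prime $q$ separately into the occurrences of $q$ in $P$, with no need to extend over the discarded prefix. You should make this reading explicit rather than trying to force totality of $f$.
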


The partially ordered set $P(\omega)/\mbox{Fin}$ is so complicated that every Boolean algebra of size $\le\omega_1$ embeds into it (see~\cite{BDHHMU}). We usually express that some classes of Borel equivalence relations are extremely complicated under the order of Borel reducibility by showing that $P(\omega)/\mbox{Fin}$ embeds into them. For instance, Louveau-Velickovic~\cite{LV} and Yin~\cite{yin} showed that $P(\omega)/\mbox{Fin}$ embeds into both LV-equalities and Borel equivalence relations between $\ell_p$ and $\ell_q$ respectively. As an application, we prove that, the partially ordered set $P(\omega)/\mbox{Fin}$ embeds into the partially ordered set of all $E(G)$'s under the ordering of Borel reducibility.

\begin{theorem}
Let $n\in\mathbb{N}^+$. Then for $A\subseteq\omega$, there is a $n$-dimensional compact connected abelian Polish group $G_A$ such that $E(\R^n)<_B E(G_A)<_B E(\T^n)$ and for $A,B\subseteq\omega$, we have
$$A\subseteq^*B\Longleftrightarrow E(G_A)\leq_B E(G_B).$$
\end{theorem}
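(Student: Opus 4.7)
The strategy is to build $G_A=\Sigma_{P_A}\times\T^{n-1}$ for a suitably chosen $P_A\in\mathcal P^\omega$, reduce $E(G_A)\leq_B E(G_B)$ to the existence of a continuous homomorphism $G_A\to G_B$ with non-archimedean kernel via the Rigid Theorem, and then recover $A\subseteq^*B$ from this homomorphism via Theorem~1.4.

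First I fix an enumeration $p_0,p_1,\ldots$ of the primes. For $A\subseteq\omega$, define $P_A\in\mathcal P^\omega$ by interleaving infinitely many copies of a fixed padding prime with a single appearance, in increasing order of $i$, of each $p_i$ with $i\in\omega\setminus A$; this gives a well-defined element of $\mathcal P^\omega$ for every $A$, even when $\omega\setminus A$ is finite (in which case $P_A$ is essentially the padding sequence). A direct combinatorial check, splitting into the four cases according to whether $A,B$ are cofinite, yields
\[P_B\preceq P_A\Longleftrightarrow A\subseteq^*B.\]
Since $\Sigma_{P_A}$ is a $1$-dimensional compact connected abelian Polish group, $G_A$ is compact connected abelian Polish of dimension~$n$. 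Theorem~1.3(5) gives $E(\R^n)<_B E(G_A)\leq_B E(\T^n)$, and the strictness $E(G_A)<_B E(\T^n)$ follows from Theorem~1.3(4): every continuous homomorphism $\T\to\Sigma_{P_A}$ is trivial (dually, $\mathrm{Hom}(\hat\Sigma_{P_A},\Z)=0$ by divisibility of $\hat\Sigma_{P_A}$), so any continuous homomorphism $\T^n\to G_A$ factors through $\{0\}\times\T^{n-1}$ and cannot be an embedding by dimension.

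For the main equivalence, the forward direction is routine: $A\subseteq^*B$ gives $P_B\preceq P_A$, whence Theorem~1.4 combined with the Rigid Theorem produces a continuous homomorphism $T\colon\Sigma_{P_A}\to\Sigma_{P_B}$ with non-archimedean kernel; then $T\times\mathrm{id}_{\T^{n-1}}\colon G_A\to G_B$ has non-archimedean kernel, and the Rigid Theorem yields $E(G_A)\leq_B E(G_B)$. The reverse direction is the crux. Given $E(G_A)\leq_B E(G_B)$, the Rigid Theorem supplies a continuous homomorphism $S\colon G_A\to G_B$ with $\ker S$ non-archimedean; I dualize, writing $\hat S\colon\hat\Sigma_{P_B}\oplus\Z^{n-1}\to\hat\Sigma_{P_A}\oplus\Z^{n-1}$ as a $2\times 2$ block matrix. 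The cross block $\hat\Sigma_{P_B}\to\Z^{n-1}$ vanishes by divisibility of $\hat\Sigma_{P_B}$, and the non-archimedean kernel of $S$ dualizes to torsion cokernel of $\hat S$. A rank argument (exploiting that $\hat\Sigma_{P_A}$ is not finitely generated) then forces the solenoid-to-solenoid block $\alpha\colon\hat\Sigma_{P_B}\to\hat\Sigma_{P_A}$ to be nonzero; since $\hat\Sigma_{P_A}$ has rank~$1$, $\alpha$ automatically has torsion cokernel, and dually yields a continuous homomorphism $\Sigma_{P_A}\to\Sigma_{P_B}$ with non-archimedean kernel. By the Rigid Theorem and Theorem~1.4, $P_B\preceq P_A$, and hence $A\subseteq^*B$.

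The principal technical obstacle is verifying this last step: showing that the non-archimedean kernel condition on the mixed map $S$ genuinely descends to a non-archimedean-kernel property of the solenoid-to-solenoid block $\alpha$. The crux is that the finitely generated subgroup $\mathrm{Im}(\beta)\subseteq\hat\Sigma_{P_A}$ (coming from the torus-to-solenoid block) cannot, on its own, yield a torsion quotient of the non-finitely-generated group $\hat\Sigma_{P_A}$; combined with the Pontryagin dictionary between non-archimedean kernels and torsion cokernels, this forces $\alpha\neq 0$, from which the conclusion follows.
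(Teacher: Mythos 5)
Your overall strategy is sound and genuinely different from the paper's. The paper takes $G_A=\Sigma_{P_A}^n$, obtains the family of solenoids from Lemma~\ref{order} (a construction based on the counting functions $t^P$ and $D(P,Q)$), and reduces the $n$-dimensional statement to the $1$-dimensional one via Lemma~\ref{product}, whose hard direction is a Hall's-marriage-theorem argument deciding which factors of the source must surject onto which factors of the target. You instead take $G_A=\Sigma_{P_A}\times\T^{n-1}$ with a more direct coding of $A$ into $P_A$ (your combinatorial claim $P_B\preceq P_A\iff A\subseteq^*B$ does check out), and you replace the product lemma by a Pontryagin-duality block-matrix computation tailored to this particular product. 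Both routes are legitimate; yours avoids Hall's theorem and the general product analysis at the cost of importing the dual machinery of Section~4.

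However, the justification you give for the crux is wrong as stated. You claim that the finitely generated subgroup $\mathrm{Im}(\beta)\subseteq\widehat{\Sigma_{P_A}}$ ``cannot, on its own, yield a torsion quotient of the non-finitely-generated group $\widehat{\Sigma_{P_A}}$.'' This is false: $\widehat{\Sigma_{P_A}}$ is a subgroup of $\Q$, hence has torsion-free rank $1$, so \emph{every} nonzero subgroup of it --- in particular any nonzero $\mathrm{Im}(\beta)$ --- already has torsion quotient (e.g.\ $\Z[1/2]/\Z$ is the Pr\"ufer $2$-group, torsion, even though $\Z[1/2]$ is not finitely generated). So this line of reasoning cannot force $\alpha\neq0$. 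The step is nevertheless salvageable by a rank count on the \emph{whole} dual group rather than on the solenoid factor: granting $\gamma=0$ (which holds not because $\widehat{\Sigma_{P_B}}$ is divisible --- it generally is not --- but because every element of it is divisible by the unbounded partial products $P_B(0)\cdots P_B(k)$, whence $\mathrm{Hom}(\widehat{\Sigma_{P_B}},\Z)=0$), the assumption $\alpha=0$ makes $\mathrm{im}(\widehat{S})$ a quotient of $\Z^{n-1}$, hence of torsion-free rank at most $n-1$, while $\mathrm{rank}(\widehat{G_A})=n$; since rank is additive in short exact sequences and torsion groups have rank $0$, the cokernel of $\widehat{S}$ could not be torsion. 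With that repair (and the corresponding correction to your parenthetical ``by divisibility'' remarks), your proof goes through.
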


We also get a sufficient and necessary condition concerning dual groups.

\begin{theorem}[Dual Rigid Theorem]
Let $G$ be a compact connected abelian Polish group and $H$ a locally compact abelian Polish group. Then $E(G)\leq_B E(H)$ iff there is a continuous homomorphism $S^*:\widehat{H}\to\widehat{G}$ such that $\widehat{G}/{\rm im}(S^*)$ is a torsion group.
\end{theorem}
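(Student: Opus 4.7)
The plan is to derive the Dual Rigid Theorem from the Rigid Theorem (Theorem 1.2) by translating everything through Pontryagin duality. Since $G$ is compact abelian Polish, $\widehat{G}$ is a countable discrete abelian group, while $\widehat{H}$ is a locally compact abelian Polish group. The duality functor gives a natural bijection between continuous homomorphisms $S:G\to H$ and continuous homomorphisms $S^*:\widehat{H}\to\widehat{G}$ via $S^*(\chi)=\chi\circ S$. Hence it suffices to prove that, for each such $S$, the kernel $\ker(S)$ is non-archimedean if and only if $\widehat{G}/{\rm im}(S^*)$ is a torsion group; the theorem will then follow by combining this equivalence with Theorem 1.2 (in the converse direction one recovers $S$ from a given $S^*$ by dualising once more and invoking Pontryagin reflexivity).

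The central computation would be that ${\rm im}(S^*)$ equals the annihilator
\[
\ker(S)^\perp=\{\psi\in\widehat{G}:\psi|_{\ker(S)}\equiv 1\}.
\]
The inclusion ${\rm im}(S^*)\subseteq\ker(S)^\perp$ is immediate, since $\chi\circ S$ is trivial on $\ker(S)$ for every $\chi\in\widehat{H}$. For the reverse inclusion, any $\psi\in\ker(S)^\perp$ factors through $G/\ker(S)$, which via $S$ is topologically isomorphic to ${\rm im}(S)$ (because $G$ is compact, $S$ is a closed map and ${\rm im}(S)$ is a compact, hence closed, subgroup of $H$). The standard extension theorem for characters of closed subgroups of LCA groups then produces $\chi\in\widehat{H}$ with $\chi\circ S=\psi$.

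Once ${\rm im}(S^*)=\ker(S)^\perp$ is established, the annihilator duality $\widehat{G}/\ker(S)^\perp\cong\widehat{\ker(S)}$ converts the condition ``$\widehat{G}/{\rm im}(S^*)$ is torsion'' into ``$\widehat{\ker(S)}$ is torsion''. Since $\ker(S)$ is a closed subgroup of the compact group $G$, it is itself a compact abelian group, and for a compact abelian group $K$ the discrete dual $\widehat{K}$ is a torsion group exactly when $K$ is totally disconnected, which for compact groups is the same as being non-archimedean (profinite). Chaining these equivalences yields the desired biconditional.

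The main obstacle I anticipate is the surjectivity step ${\rm im}(S^*)\supseteq\ker(S)^\perp$, which rests on extending a character of the compact subgroup ${\rm im}(S)\leq H$ to all of $H$; compactness of ${\rm im}(S)$ is what makes this work, and it is precisely the place where the hypothesis that $G$ is compact is used. The remaining ingredients, namely Pontryagin reflexivity to recover $S$ from $S^*$ and the dictionary between torsion discrete duals and totally disconnected compact groups, are classical facts about LCA groups and may be quoted verbatim from \cite{HM13}.
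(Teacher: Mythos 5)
Your proposal is correct and follows essentially the same route as the paper: both directions are obtained by dualising the Rigid Theorem, using the identity ${\rm im}(S^*)=\ker(S)^\perp$ (equivalently $\ker(S)={\rm im}(S^*)^\perp$), the isomorphism $\widehat{G}/\ker(S)^\perp\cong\widehat{\ker(S)}$, and the fact that a compact abelian group is totally disconnected (equivalently, non-archimedean) iff its dual is torsion. The only cosmetic difference is that you spell out the character-extension argument for the surjectivity ${\rm im}(S^*)\supseteq\ker(S)^\perp$, where the paper simply cites Armacost and Hofmann--Morris.
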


{\bf Notation convention:} In this article, the addition operation of any subgroup of $\R^n$ is denoted by $+$ and its identity element is denoted by $0$. Unless otherwise specified, for abstract abelian topological groups $G$, we still use multiplicative notation to express the group operation, and use $1_G$ to express the identity element of $G$, since we often consider subgroups of $\T^\omega$.

This article is organized as follows. In section 2, we prove theorems 1.1--1.3. In section 3, we consider $P$-{adic solenoids} and prove theorems 1.4 and 1.5. Finally, In section 4, we consider dual groups and prove Theorem 1.6.

\section{Locally compact abelian Polish groups}

\begin{definition}[{\cite[Definition 6.1]{DZ}}]
Let $G$ be a Polish group. We define equivalence relation $E_*(G)$ on $G^\omega$ as: for $x,y\in G^\omega$,
$$xE_*(G)y\iff\lim_nx(0)x(1)\dots x(n)y(n)^{-1}\dots y(1)^{-1}y(0)^{-1}\mbox{ converges}.$$
\end{definition}

The following is an easy but important observation.

\begin{proposition}
Let $G$ be a Polish group. Then $E(G)\sim_B E_*(G)$.
\end{proposition}

\begin{proof}
To see that $E(G)\leq_B E_*(G)$, for $x\in G^\omega$, we define $\theta(x)\in G^\omega$ as
$$\theta(x)(n)=\left\{\begin{array}{ll}x(0), & n=0,\cr x(n-1)^{-1}x(n), & n>0.\end{array}\right.$$
Then $\theta$ witnesses that $E(G)\leq_B E_*(G)$.

To show the converse, for $x\in G^\omega$, we define $\vartheta(x)\in G^\omega$ as
$$\vartheta(x)(n)=x(0)x(1)\cdots x(n).$$
Then $\vartheta$ witnesses that $E_*(G)\leq_B E(G)$.
\end{proof}
In this article, we focus on abelian Polish groups. For abelian Polish groups $G$, it is more convenient to take $E_*(G)$ as research object than $E(G)$.

Some reducibility results are obtained in~\cite{DZ}. Since we will use them again and again in this article, for the convenience of readers, we list them as follows.

\begin{proposition}[{\cite[Proposition 3.4]{DZ}}]\label{closed subgroup}
Let $G,H$ be two Polish groups. If $G$ is topologically isomorphic to a closed subgroup of $H$, then $E(G)\le_BE(H)$.
\end{proposition}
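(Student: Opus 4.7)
The plan is to produce an explicit continuous (hence Borel) reduction by applying the embedding coordinatewise. Let $\iota:G\to H$ be a topological group isomorphism of $G$ onto a closed subgroup $\iota(G)\subseteq H$. Define $\theta:G^\omega\to H^\omega$ by $\theta(x)(n)=\iota(x(n))$ for $x\in G^\omega$ and $n\in\omega$. Since $\iota$ is continuous, $\theta$ is continuous coordinatewise, and hence continuous with respect to the product topologies; in particular, $\theta$ is Borel.

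Next I would verify the reduction property $xE(G)y\iff\theta(x)E(H)\theta(y)$. The forward implication is immediate: if $\lim_n x(n)y(n)^{-1}=g$ in $G$, then by continuity of $\iota$ and the fact that $\iota$ is a group homomorphism,
$$\lim_n \iota(x(n))\iota(y(n))^{-1}=\lim_n \iota(x(n)y(n)^{-1})=\iota(g)$$
in $H$, so $\theta(x)E(H)\theta(y)$.

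The converse is the step where the closedness hypothesis is actually used. Suppose $\lim_n \iota(x(n))\iota(y(n))^{-1}=h$ in $H$. Each term of this sequence lies in the subgroup $\iota(G)$, which is closed in $H$, so the limit $h$ belongs to $\iota(G)$, say $h=\iota(g)$ for a unique $g\in G$. Because $\iota$ is a topological isomorphism onto $\iota(G)$, its inverse $\iota^{-1}:\iota(G)\to G$ is continuous, and applying it to the convergent sequence in $\iota(G)$ gives $\lim_n x(n)y(n)^{-1}=g$ in $G$, i.e.\ $xE(G)y$. The main (mild) obstacle here is recognizing that without closedness the limit $h$ could escape $\iota(G)$ and the backwards implication would fail; once closedness is in place the argument is routine.
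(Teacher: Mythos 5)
Your proof is correct and is the standard argument: the paper itself does not reprove this proposition but cites it from \cite{DZ}, where the reduction is exactly the coordinatewise application of the embedding, with closedness of $\iota(G)$ used precisely as you use it to keep the limit inside the image and continuity of $\iota^{-1}$ pulling the convergence back to $G$. Nothing is missing.
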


A metric $d$ on a group $G$ is called {\it two sided invariant} if $d(ghl,gkl)=d(h,k)$ for all $g,h,k,l\in G$. We say that a Polish group $G$ is TSI if it admits a compatible two sided invariant metric. Any abelian Polish group is TSI~(cf.~\cite[Exercise 2.1.4]{gao}).

\begin{lemma}[{\cite[Theorem 6.5]{DZ}}]\label{[0,1-embed]}
Let $G,H,K$ be three TSI Polish groups. Suppose $\psi:G\to H$ and $\varphi:H\to K$ are continuous homomorphisms with $\varphi(\psi(G))=K$ such that $\ker(\varphi\circ\psi)$ is non-archimedean. If the interval $[0,1]$ embeds into $H$, then $E(G)\le_BE(H)$.
\end{lemma}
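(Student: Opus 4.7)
The plan is to build a Borel reduction $\theta: G^\omega \to H^\omega$ that refines the natural candidate $x \mapsto (\psi(x(n)))_n$ by encoding extra data inside the $[0,1]$-part of $H$. Let $i:[0,1]\hookrightarrow H$ be a topological embedding and write $N:=\ker(\varphi\circ\psi)$. The naive map handles the forward implication $x\, E(G)\, y \Rightarrow \theta(x)\, E(H)\,\theta(y)$ for free, by continuity of $\psi$. Its reverse implication however can fail, because $\ker\psi\subseteq N$ may contain sequences escaping to infinity in $G$ while collapsing under $\psi$, so extra information must be injected.

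To patch the reverse direction I would exploit non-archimedeanness of $N$: fix a decreasing neighborhood base $N_0\supseteq N_1\supseteq\cdots$ of $1_G$ by open subgroups inside $N$, and fix a Borel cross-section $\sigma:K\to G$ of the continuous surjective homomorphism $\varphi\circ\psi$ (available by standard Polish-group selection). For $g\in G$, write $g=\sigma(\varphi(\psi(g)))\cdot n(g)$ with $n(g)\in N$; Borel-encode the coset sequence $(n(g)N_k)_k\in\prod_k N/N_k$ as a real number $\alpha(g)\in[0,1]$ via a standard injection of a countable product of countable sets into the binary digits of $[0,1]$. Then define
$$\theta(x)(n):=\psi(x(n))\cdot i(\alpha(x(n))).$$
Borel measurability of $\theta$ comes for free from the construction.

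For the forward direction, $x\,E(G)\,y$ means $x(n)y(n)^{-1}$ converges in $G$, which via the two-sided invariant metric forces both $\psi(x(n))\psi(y(n))^{-1}$ to converge in $H$ and the coset encodings $\alpha(x(n)),\alpha(y(n))$ to become eventually consistent modulo each $N_k$, so $i(\alpha(x(n)))i(\alpha(y(n)))^{-1}$ converges in $H$ as well. For the reverse direction, starting from convergence of $\theta(x)(n)\theta(y)(n)^{-1}$ in $H$, apply $\varphi$ to obtain convergence of $\varphi(\psi(x(n)y(n)^{-1}))$ in $K$ (modulo a convergent contribution from $\varphi\circ i$), which through the cross-section $\sigma$ controls $x(n)y(n)^{-1}$ modulo $N$. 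The remaining non-archimedean residue is then determined by the $[0,1]$-factor: convergence of the $i(\alpha(x(n))-\alpha(y(n)))$ part pins down the coset information $(x(n)y(n)^{-1})N_k$ for each $k$, yielding full convergence of $x(n)y(n)^{-1}$ in $G$.

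The main obstacle is the \emph{decoupling} of the $\psi$-contribution from the $i$-contribution inside a single convergent sequence in $H$: convergence of the product $\psi(x(n)y(n)^{-1})\cdot i(\alpha(x(n))-\alpha(y(n)))$ in $H$ does not a priori force each factor to converge separately, and they could in principle cancel. I expect this is resolved by one of two devices: either (i) choose the range of $\alpha$ to lie in a thin subset of $[0,1]$ whose image in $K$ under $\varphi\circ i$ is arranged, using surjectivity of $\varphi\circ\psi$, so that after projecting by $\varphi$ the two contributions decouple cleanly; or (ii) argue directly from the TSI metrics on $G,H,K$ and non-archimedeanness of $N$ that the two factors must separately stabilize. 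Carrying this out while preserving Borel measurability throughout is the technical heart of the argument.
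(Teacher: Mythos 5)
The paper does not prove this lemma at all: it is imported verbatim from \cite[Theorem 6.5]{DZ}, so there is no internal proof to compare your argument against. Judged on its own terms, your proposal is not a proof, and the gap is not only the ``decoupling'' issue you flag. The forward direction, which you claim comes essentially for free, already fails for the map you define. If $x\,E(G)\,y$, only the products $x(n)y(n)^{-1}$ are controlled; the individual terms $x(n),y(n)$ may wander arbitrarily in $G$. Your $\alpha$ is built from a Borel (hence badly discontinuous) cross-section $\sigma$ together with a digit-encoding of coset data, so even in the abelian case, taking $y(n)=x(n)c$ for a fixed $c\in G$ gives $x\,E(G)\,y$ while $n(x(n))$ and $n(y(n))$ differ by $c$ composed with the uncontrolled quantity $\sigma(\varphi\psi(x(n)))^{-1}\sigma(\varphi\psi(x(n)c))$; there is then no reason for $i(\alpha(x(n)))\,i(\alpha(y(n)))^{-1}$ to converge. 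Since two-sided invariance gives
$$d\bigl(\theta(x)(n)\theta(y)(n)^{-1},\ \psi(x(n))\psi(y(n))^{-1}\bigr)=d\bigl(i(\alpha(x(n)))\,i(\alpha(y(n)))^{-1},\ 1_H\bigr),$$
the sequence $\theta(x)(n)\theta(y)(n)^{-1}$ need not converge, so $\theta$ is not a reduction even in the easy direction. Relatedly, you write $i(\alpha(x(n))-\alpha(y(n)))$ as though $i$ were additive; it is only a topological embedding of an interval and carries no algebraic structure, so statements like ``convergence of the $i(\cdot)$ part pins down the coset information $(x(n)y(n)^{-1})N_k$'' do not have a meaning.

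The reverse direction is in no better shape: you correctly identify the decoupling of the $\psi$-contribution from the $i$-contribution as the technical heart, but neither of your proposed devices is carried out, and device (ii) cannot work as stated because in any topological group the product of a non-convergent sequence with another non-convergent sequence can converge. A structural hint that the approach is miscalibrated comes from the paper's own Lemma~\ref{borereduce} (= \cite[Theorem 6.13]{DZ}): the role of the hypothesis that $[0,1]$ embeds into $H$ is to upgrade a \emph{continuous} map $S:G\to H$ with a local reduction property near $1_G$ to a global Borel reduction. Any successful construction therefore has to feed the non-archimedean data of $\ker(\varphi\circ\psi)$ into the interval in a way that is continuous at the identity, which a binary-digit encoding of coset sequences through a Borel cross-section is not. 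As it stands the proposal is a plan with its central step, and an unnoticed failure in its ``free'' step, both unresolved.
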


\begin{lemma}[{\cite[Theorem 6.13]{DZ}}]\label{borereduce}
Let $G,H$ are be TSI Polish groups such that $H$ is locally compact. If $E(G)\le_BE(H)$, then there exist an open normal subgroup $G_c$ of $G$ and a continuous map $S:G_c\to H$ with $S(1_G)=1_H$ such that, for $x,y\in G_c^\omega$, if $\lim_nx(n)y(n)^{-1}=1_G$, then
$$xE_*(G_c)y\iff S(x)E_*(H)S(y),$$
where $S(x)(n)=S(x(n)),S(y)(n)=S(y(n))$ for each $n\in\omega$.

In particular, if $G=G_c$ and the interval $[0,1]$ embeds in $H$, then the converse is also true.
\end{lemma}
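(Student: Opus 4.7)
The plan is to manufacture the continuous map $S$ from a Borel reduction $\theta$ by combining a Baire-category continuity argument with the local compactness of $H$, taking $G_c$ to be an open subgroup on which the construction is uniform.

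First, pass from $E(G)\le_B E(H)$ to a Borel reduction $\theta\colon G^\omega\to H^\omega$ of $E_*(G)$ to $E_*(H)$ (legitimate since $E(G)\sim_B E_*(G)$). Since Borel maps between Polish spaces are continuous off a meager set, $\theta$ is continuous on a dense $G_\delta$, and by absorbing a base-point translation into $\theta$ we arrange that $\theta(\mathbf{1})=\mathbf{1}$, where $\mathbf{1}=(1_G,1_G,\ldots)$, and that $\theta$ is continuous at $\mathbf{1}$. For each $g\in G$ and $n\in\omega$ introduce the test perturbation $\sigma_g^n\in G^\omega$ equal to $g$ at coordinate $n$ and $1_G$ elsewhere: then $\sigma_g^n\, E_*(G)\, \mathbf{1}$, and $\sigma_g^n\to\mathbf{1}$ in $G^\omega$, so $\theta(\sigma_g^n)\to\mathbf{1}$ with each $\theta(\sigma_g^n)$ being $E_*(H)$-equivalent to $\mathbf{1}$.

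Next I would use local compactness of $H$ to extract a ``derivative'' $S(g)$ from the partial-product witnesses. For $g$ in a compact neighbourhood $U$ of $1_G$, the witnesses $\lim_m\prod_{k\le m}\theta(\sigma_g^n)(k)$ lie in a fixed compact subset of $H$, so cluster points exist; rigidity of $\theta$, applied to the $E_*(G)$-equivalence between $\sigma_g^n$ and $\sigma_g^{m}$, forces the cluster point to be unique, yielding $S(g)$. Let $G_c$ be the open subgroup of $G$ generated by $U$ and extend $S\colon G_c\to H$ multiplicatively; $G_c$ is normal because $G$ is abelian, and continuity of $S$ at $1_G$ (hence on $G_c$) follows from uniformity on $U$. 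The required biconditional for $x,y\in G_c^\omega$ with $x(n)y(n)^{-1}\to 1_G$ then comes out: the hypothesis keeps the relevant perturbations inside $U$ eventually, so coordinatewise $S(x),S(y)$ tracks the $E_*(H)$-witness of $\theta(x),\theta(y)$ up to a convergent correction, and $xE_*(G_c)y\iff S(x)E_*(H)S(y)$ drops out.

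For the converse in the case $G=G_c$ with $[0,1]\hookrightarrow H$, the map $S$ feeds into the construction behind Lemma~\ref{[0,1-embed]}: given arbitrary $x\in G^\omega$ one replaces it by a nearby sequence via a $[0,1]$-path in $H$ and applies $S$ coordinatewise, producing a Borel reduction $E(G)\le_B E(H)$. The hard part is the forward direction, specifically the simultaneous construction of $G_c$ and $S$: the test-sequence procedure yields only pointwise subsequential candidates, and the challenge is to show that rigidity of the reduction pins them down uniformly. The leverage is that $\theta$ preserves full $E_*$-equivalence, not just equivalence to $\mathbf{1}$, so comparing test perturbations at different coordinates forces the witnesses to agree modulo compact perturbations and assemble into a continuous map on an honest open subgroup.
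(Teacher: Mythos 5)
This lemma is not proved in the paper at all: it is quoted verbatim from Theorem 6.13 of \cite{DZ}, so there is no internal argument to compare against. Judged on its own terms, your sketch has reasonable opening moves (pass to a reduction of $E_*(G)$ to $E_*(H)$, use Baire category to get continuity at $\mathbf{1}$ after a translation, probe with the one-coordinate perturbations $\sigma_g^n$, use local compactness of $H$ to get cluster points of the witnesses $w_n(g)=\lim_m\prod_{k\le m}\theta(\sigma_g^n)(k)$), but the steps that carry all the weight are asserted rather than argued, and one of them is wrong as stated.

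First, the uniqueness of the cluster point is unsupported. The $E_*(G)$-equivalence of $\sigma_g^n$ and $\sigma_g^m$ only gives $\theta(\sigma_g^n)\,E_*(H)\,\theta(\sigma_g^m)$, i.e.\ convergence of the quotient of partial products --- which is automatic once each sequence of partial products converges separately and imposes no constraint forcing $w_n(g)=w_m(g)$. Likewise, continuity of $\theta$ at $\mathbf{1}$ gives only coordinatewise convergence $\theta(\sigma_g^n)(k)\to 1_H$, which controls no tail and hence says nothing about the limits of partial products; even membership of the $w_n(g)$ in a fixed compact set needs an argument. Pinning down $S(g)$ requires comparing sequences carrying infinitely many perturbations against their finite modifications; that is the real content of the cited theorem and it is missing here. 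Second, ``extend $S$ multiplicatively to the subgroup generated by $U$'' is both ill-defined (elements of the generated subgroup have many representations as words in $U$) and proves far too much: it would make $S$ a homomorphism on $G_c\supseteq G_0$, whereas the lemma asserts only that $S$ is a continuous map. Indeed, the entire point of Lemma~\ref{Base} and its path-lifting argument is that additivity of $S$ on $f(\R^m)$ is a nontrivial \emph{additional} conclusion needing the hypothesis $H_0\subseteq\R^\omega\times\T^\omega$; if $S$ came out multiplicative for free, much of Section~2 of this paper would collapse. The correct $G_c$ must be extracted from the reduction itself (as the set of $g$ for which the limiting procedure stabilizes, shown to be an open subgroup), not decreed to be $\langle U\rangle$, and the biconditional for $x,y\in G_c^\omega$ --- the actual conclusion of the lemma --- is left at the level of ``drops out.''
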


\begin{remark}\label{G_0toH_0}
Since $G_c$ in the preceding lemma is an open subgroup, it is also closed. So $G_0\subseteq G_c$ as it is connected. Since $S$ is continuous, we have $S(G_0)\subseteq H_0$. Moreover, for all $x,y\in G_0^\omega$, if $\lim_nx(n)y(n)^{-1}=1_G$, we have
$$xE_*(G_0)y\Longleftrightarrow S(x)E_*(H_0)S(y).$$
\end{remark}

The next lemma plays the key role in the proof of Theorem~\ref{lca}.

\begin{lemma}\label{Base}
Let $G$ and $H$ be two abelian Polish groups such that:
\begin{enumerate}
\item [(1)] $H$ is locally compact,
\item [(2)] $H_0\subseteq\R^\omega\times\T^\omega$,
\item [(3)] there is a nonzero continuous homomorphism $f:\R^m\rightarrow G$ for some $m\in\N^+$.
\end{enumerate}
If $E_*(G)\leq_B E_*(H)$, then there is a continuous map $S:G_0\rightarrow H_0$ such that
the map $S$ restricted on $f(\R^m)$ is a homomorphism to $H_0$.
\end{lemma}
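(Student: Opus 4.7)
The plan is to apply Lemma~\ref{borereduce} to $E_*(G)\leq_B E_*(H)$ to obtain an open normal subgroup $G_c\leq G$ and a continuous map $S:G_c\to H$ with $S(1_G)=1_H$ such that, for $x,y\in G_c^\omega$ with $\lim_n x(n)y(n)^{-1}=1_G$, one has $xE_*(G_c)y\iff S(x)E_*(H)S(y)$. Since $\R^m$ is connected, $f(\R^m)\subseteq G_0\subseteq G_c$; by Remark~\ref{G_0toH_0}, $S$ restricts to a continuous map $S:G_0\to H_0$ with the analogous reduction on $G_0^\omega$ into $H_0^\omega$. Setting $T=S\circ f:\R^m\to H_0$, a continuous map with $T(0)=0$, the lemma reduces to showing $T$ is a group homomorphism.

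\textbf{Reduction to a real coordinate.} The next step is to translate the reduction into a condition on $T$: for $u_n,v_n\in\R^m$ with $u_n-v_n\to 0$, $\sum_n f(u_n-v_n)$ converges in $G_0$ iff $\sum_n[T(u_n)-T(v_n)]$ converges in $H_0$. By hypothesis (2) we view $H_0\subseteq\R^\omega\times\T^\omega$; convergence in this product is coordinate-wise, so it suffices to analyse the coordinates of $T$ separately. For each $\T$-valued coordinate, I would lift to an $\R$-valued continuous map via the simple connectedness of $\R^m$; the condition $u_n-v_n\to 0$ makes the lifted increments tend to zero in $\R$, so convergence of the $\T$-valued sum lifts to convergence of the $\R$-valued sum. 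This reduces the problem to showing that each continuous (possibly lifted) coordinate $T_j:\R^m\to\R$ with $T_j(0)=0$ is additive; combined with continuity this gives $\R$-linearity by the Cauchy functional equation, and reassembling the coordinates shows $T$ is a continuous homomorphism.

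\textbf{Forcing additivity.} Finally, fix a coordinate $T_j$ and suppose for contradiction that its defect $\beta_j(u,v)=T_j(u+v)-T_j(u)-T_j(v)$ does not vanish identically. Specialising the reduction to $u_n=v_n+a_n$ for arbitrary $v_n\in\R^m$ and $a_n\to 0$ (and using the case $v_n\equiv 0$, which gives $\sum_n T_j(a_n)$ convergent iff $\sum_n f(a_n)$ convergent) yields the crucial consequence
\[
\sum_n f(a_n)\text{ converges}\Longrightarrow\sum_n\beta_j(v_n,a_n)\text{ converges,}
\]
and this must hold \emph{for every} auxiliary sequence $v_n$. The plan is to construct sequences $v_n,a_n$ witnessing $\sum_n f(a_n)$ convergent but $\sum_n\beta_j(v_n,a_n)$ divergent, a direct contradiction. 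The guiding example is $T_j(x)=x+\sin x$ on $\R$: choosing centres $v_n=n\pi$ and increments $a_n=(-1)^n/\sqrt{n}$ yields $\beta_j(v_n,a_n)\sim 2/\sqrt n$ for odd $n$ and $\approx 0$ for even $n$, so $\sum_n\beta_j(v_n,a_n)$ diverges while $\sum_n a_n$ converges. The hard part will be adapting this to an arbitrary continuous non-additive $T_j$: one must locate drifting centres $v_n$ (possibly going to infinity in $\R^m$) where the local behaviour of $T_j$ deviates systematically from that at the origin, and pair them with alternating $a_n$ of appropriately slow decay so that $\beta_j(v_n,a_n)$ aligns into a non-summable series, using only continuity of $T_j$ and the unboundedness of $\R^m$.
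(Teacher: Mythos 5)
Your setup (obtaining $S:G_0\to H_0$ from Lemma~\ref{borereduce} and Remark~\ref{G_0toH_0}, passing to coordinates of $\R^\omega\times\T^\omega$, and lifting the $\T$-valued coordinates to $\R$-valued ones) matches the paper's. But the decisive step --- deducing that $T=S\circ f$ is additive --- is not actually carried out. Your ``Forcing additivity'' paragraph reduces the problem to constructing sequences $v_n$ and $a_n\to 0$ with $\sum_n f(a_n)$ convergent in $G$ but $\sum_n\beta_j(v_n,a_n)$ divergent, and then explicitly defers the construction (``the hard part will be adapting this\dots''). That construction is where all the difficulty lives, and it faces concrete obstacles you have not addressed: non-vanishing of the defect $\beta_j$ at a single point does not by itself give non-vanishing at points whose second argument is arbitrarily small (you need something like the cocycle identity $\beta_j(v,a+b)+\beta_j(a,b)=\beta_j(v+a,b)+\beta_j(v,a)$ to propagate it); and keeping $\sum_n f(a_n)$ convergent \emph{in $G$} (a constraint on all coordinates at once, not just the $j$-th) while making the defects non-summable is delicate --- repeating a fixed $a$ makes $\sum_n f(a_n)$ diverge unless $f(a)=1_G$, so one is forced into alternating choices whose effect on $\beta_j$ mere continuity does not control. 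There is also a secondary error: you assert that $u_n-v_n\to 0$ in $\R^m$ forces the lifted increments $T_j(u_n)-T_j(v_n)$ to tend to $0$ in $\R$; this is false for a general continuous global lift (already $T_j(x)=x^2$ with $u_n=n+1/n$, $v_n=n$ fails), so convergence of the $\T$-valued products does not transfer to the lifted real series without further argument.

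The paper avoids all of this by a different and much shorter device. For $g_0=f(a_0)$, $g_1=f(a_1)$ it considers the two paths $a^1(t)=a_0+t(a_1-a_0)$ and $a^2(t)=t(a_0+a_1)$ in $\R^m$, lifts each coordinate of $S(f(a^l(t)))$ to a continuous function $F^l_j:[0,1]\to\R$ (lifting only along compact paths, which sidesteps your lifting issue), and invokes the midpoint identity established in the proof of \cite[Lemma 6.17]{DZ}: $F^l_j(1/2)=\bigl(F^l_j(0)+F^l_j(1)\bigr)/2$. Since the two paths agree at $t=1/2$, this yields $S(g_0)S(g_1)=S(f(a^1(1/2)))^2=S(f(a^2(1/2)))^2=S(g_0g_1)$ directly. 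If you want to complete your route instead, you would need to actually prove the divergence construction; as written the proposal is a plan with its central step missing.
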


\begin{proof}
First, from Remark~\ref{G_0toH_0}, we can obtain a continuous map $S:G_0\rightarrow H_0$ with $S(1_{G_0})=1_{H_0}$ such that, for $x,y\in G_0^\omega$, if $\lim_nx(n)y(n)^{-1}=1_{G_0}$, then
$$xE_*(G_0)y\Longleftrightarrow S(x)E_*(H_0)S(y),$$
where $S(x)(n)=S(x(n)),S(y)(n)=S(y(n))$ for each $n\in\omega$.

Since $H_0\subseteq\R^\omega\times\T^\omega$, without loss of generality we may assume that $h(2k)\in\R$ and $h(2k+1)\in\T$ for all $h\in H_0.$ For $k\in\omega$, we define continuous homomorphims
$\phi^{2k}:H_0\rightarrow\R$ and $\phi^{2k+1}:H_0\rightarrow\T$ by $\phi^j(h)=h(j)$.

Now fix $g_0,g_1\in f(\R^m)$ and find $a_0,a_1\in\R^m$ such that $f(a_0)=g_0$ and $f(a_1)=g_1$.
For $t\in[0,1]$ and $l\in\{1,2\}$, define $a^l(t)\in\R^m$ as
$$a^l(t)=\left\{\begin{array}{ll}a_0+t(a_1-a_0), & l=1,\cr t(a_0+a_1), & l=2.\end{array}\right.$$
By the following claim, we can easily construct a continuous function $F^l_j:[0,1]\rightarrow\R$ for each $l\in\{1,2\}$ and $k\in\omega$ such that
$$F^l_{2k}(t)=\phi^{2k}(S(f(a^l(t)))),\quad \exp(\i F^l_{2k+1}(t))=\phi^{2k+1}(S(f(a^l(t)))).\leqno{(*)}$$
The nontrivial part of the construction, i.e., $j=2k+1$, follows from a more general claim.

\begin{claim}
Given a continuous function $\gamma:[0,1]\rightarrow\T$ and $t_0\in[0,1]$ with $\exp(\i s_0)=\gamma(t_0)$ for some $s_0\in\R$, there exists a continuous function $\widetilde{\gamma}:[0,1]\rightarrow\R$ such that $\exp(\i\widetilde{\gamma}(t))=\gamma(t)$ and $\widetilde{\gamma}(t_0)=s_0$.
\end{claim}

\begin{proof}
Note that the map $t\mapsto\exp(\i t)$ is a covering map from $\R$ to $\T$, and the interval $[0,1]$ is simply connected~(see Definitions A2.1 and Proposition A2.8 of~\cite{HM13}). So such a $\widetilde{\gamma}$ exists~(cf.~\cite[Definition A2.6]{HM13}).

For the convenience of readers, we briefly explain the construction of $\widetilde{\gamma}$. Since the map $t\mapsto\exp(it)$ is a local homeomorphism, by the continuity of $\gamma$, for each $u\in[0,1]$, there is an open interval $J_u$ containing $u$ and a continuous function $\widetilde{\gamma_u}:J_u\cap[0,1]\to\R$ such that $\sup_{t,t'\in J_u\cap[0,1]}|\gamma(t)-\gamma(t')|<\frac{1}{2}$ and  $\exp(\i\widetilde{\gamma_u}(t))=\gamma(t)$ for $t\in J_u\cap[0,1]$. Note that $\exp(\i(\widetilde{\gamma_u}(t)+2p\pi))=\exp(\i\widetilde{\gamma_u}(t))$ for each $p\in\Z$. By the compactness of $[0,1]$, there are $u_0,u_1,\cdots,u_q\in[0,1]$ such that $[0,1]\subseteq \bigcup_{0\leq i\leq q} J_{u_i}$. We can find $0=p_0,p_1,\cdots,p_q\in\mathbb{Z}$ such that for each $t\in J_{u_i}\cap J_{u_j}\cap[0,1]$, we have $\widetilde{\gamma_{u_i}}(t)+2p_i\pi=\widetilde{\gamma_{u_j}}(t)+2p_j\pi$. Then for $t\in[0,1]\cap J_{u_i}$, let $\widetilde\gamma'(t)=\widetilde{\gamma_{u_i}}(t)+2p_i\pi$. In the end, we put $\widetilde{\gamma}(t)=\widetilde\gamma'(t)-\widetilde\gamma'(t_0)+s_0$. It is obvious that $\exp(\i\widetilde{\gamma}(t))=\gamma(t)$ and $\widetilde{\gamma}(t_0)=s_0$.
\end{proof}

Note that $S(f(a^2(0)))=1_H$. We can assume that $F^2_j(0)=0$ for each $j$.

Next we claim that $F^l_j$ are linear functions.

\begin{claim}~\label{Cla2}
$F^l_j(t)=F^l_j(0)+t(F^l_j(1)-F^l_j(0))$ for $t\in[0,1]$.
\end{claim}

\begin{proof}
We only verify the claim for $l=1$. It is similar for $l=2$.

Fix $j_0\in\omega$. Define $\gamma:[0,1]\to\R$ as $\gamma(t)=F^1_{j_0}(t)-F^1_{j_0}(0)-t(F^1_{j_0}(1)-F^1_{j_0}(0))$. Note that $\gamma$ is continuous and $\gamma(0)=\gamma(1)=0$. We only need to prove that $\gamma(t)=0$ for all $t\in(0,1)$.

If not, without loss of generality we may assume that $\gamma(t_0)>0$ for some $t_0\in(0,1)$. Similar to the proof of~\cite[Lemma 6.17]{DZ}, we can find $0<\xi_0<\xi_1<\xi_2<\cdots<\xi<1$ such that $\gamma(\xi_k)=\frac{k+1}{k+2}\gamma(t_0)$ for each $k\in\omega$, and $1>\zeta_0>\zeta_1>\zeta_2>\cdots>\zeta>0, K\in\omega$ such that, for $k\ge K$, we have
$$\xi-\xi_k>\zeta_k-\zeta>\xi-\xi_{k+1},$$
$\lim_k \xi_k=\xi$, $\lim_k \zeta_k=\zeta$, $\gamma(\xi)=\gamma(t_0)$, and $\gamma(\zeta)>\gamma(\zeta_k)$ for each $k$.

Note that $f:\R^m\rightarrow G$ is a nonzero continuous homomorphism. For $p\in\omega$, we set
$$x(p)=\left\{\begin{array}{ll}f(a^1(\xi)), & p=2k,\cr f(a^1(\zeta)), & p=2k+1,\end{array}\right.\quad
y(p)=\left\{\begin{array}{ll}f(a^1(\xi_k)), & p=2k,\cr f(a^1(\zeta_k)), & p=2k+1.\end{array}\right.$$

From the alternating series test, the following series
$$(\xi-\xi_0)+(\zeta-\zeta_0)+\cdots+(\xi-\xi_k)+(\zeta-\zeta_k)+\cdots$$
is convergent.
Then
$$\begin{array}{ll}
&x(0)x(1)\cdots x(2k)y(2k)^{-1}\cdots y(1)^{-1}y(0)^{-1} \cr
=&x(0)y(0)^{-1}x(1)y(1)^{-1}\cdots x(2k)y(2k)^{-1}\cr
=&f(((\xi-\xi_0)+(\zeta-\zeta_0)+\cdots+(\xi-\xi_k))(a_1-a_0)).\cr
\end{array}$$
Since $f$ is continuous and $\lim_p x(p)y(p)^{-1}=1_G$, we have $xE_*(G)y$. And hence, by Remark~\ref{G_0toH_0}, we have $S(x)E_*(H)S(y)$.

On the other hand, we have
$$\sum_k(\gamma(\xi)-\gamma(\xi_k)+\gamma(\zeta)-\gamma(\zeta_k))\ge\sum_k(\gamma(\xi)-\gamma(\xi_k))=\sum_k\frac{\gamma(t_0)}{k+2}=\infty.$$
Note that
$$\begin{array}{ll}
&F^1_{j_0}(\xi)-F^1_{j_0}(\xi_k)+F^1_{j_0}(\zeta)-F^1_{j_0}(\zeta_k)\cr
=&\gamma(\xi)-\gamma(\xi_k)+\gamma(\zeta)-\gamma(\zeta_k)+(\xi-\xi_k+\zeta-\zeta_k)(F^1_{j_0}(1)-F^1_{j_0}(0)).\cr
\end{array}$$
If $j_0=2i$, then
$$\begin{array}{ll}
&\phi^{2i}(S(x(0))S(x(1))\cdots S(x(2k))S(y(2k))^{-1}\cdots S(y(1))^{-1}S(y(0))^{-1})\cr
=&F^1_{j_0}(\xi)-F^1_{j_0}(\xi_0)+F^1_{j_0}(\zeta)-F^1_{j_0}(\zeta_0)+\cdots+F^1_{j_0}(\xi)-F^1_{j_0}(\xi_k).\cr
\end{array}$$
Thus $S(x)E_*(H)S(y)$ fails. We get a contradiction. If $j_0=2i+1$, following similar arguments, we can also get a contradiction. This complete the proof of the claim.
\end{proof}

Now by Claim~\ref{Cla2} and $F^2_j(0)=0$, we know that
$$F^1_j(1/2)=F^1_j(0)+(F^1_j(1)-F^1_j(0))/2=(F^1_j(0)+F^1_j(1))/2,$$
$$F^2_j(1/2)=F^2_j(1)/2.$$
By comparing equation $(*)$ before Claim 1, it follows that
$$S(f(a^1(1/2)))^2=S(f(a_0))S(f(a_1))=S(g_0)S(g_1),$$
$$S(f(a^2(1/2)))^2=S(f(a_0+a_1))=S(g_0g_1).$$
Since $a^1(1/2)=a^2(1/2)$, we have $S(g_0)S(g_1)=S(g_0g_1)$.

So, the map $S:f(\R^m)\rightarrow H_0$ is a continuous homomorphism.
\end{proof}


Let us recall the structure of Hausdorff locally compact abelian groups. Let $G$ be a Hausdorff locally compact abelian group, then $G$ is topologically isomorphic to the group $\R^n\times H$, where $H$ is a locally compact abelian group containing a compact open subgroup~(cf.~\cite[Theorem 24.30]{HAR}). Moreover, if $G$ is also connected, then it is a direct product of a compact connected abelian group $K$ and the group $\R^n$~(cf.~\cite[Theorem 9.14]{HAR}). Any locally compact connected metrizable abelian group can be embedded as a closed subgroup of $\R^n\times\T^\omega$. In particular, all compact metrizable abelian groups can be embedded in $\T^\omega$~(see Page 119 of~\cite{Ar}). $G$ is said to be {\it solenoidal} if there is a continuous homomorphism $f:$ $\R\rightarrow G$ such that $f(\R)$ is dense in $G$~(see~\cite[(9.2)]{HAR}). It is well known that a compact metrizable abelian group is solenoidal iff it is connected~(see Page 13 and Proposition 5.16 of~\cite{Ar}). Thus for each locally compact connected metrizable abelian group $G$, there is a continuous homomorphism $f:$ $\R^m \rightarrow G$ which satisfies $\overline{f(\R^m)}=G$. For more details on locally compact abelian groups, we refer to~\cite{Ar,HAR}.

By applying Lemma~\ref{Base} for locally compact abelian Polish groups, we get the following results.

\begin{theorem}\label{lca}
Let $G$ and $H$ be two locally compact abelian Polish groups.
If $E(G)\leq_B E(H)$, then there is a continuous homomorphism $S:G_0\rightarrow H_0$ such that $\ker(S)$ is non-archimedean.
\end{theorem}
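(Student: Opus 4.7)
The plan is to start from Lemma~\ref{borereduce} and Remark~\ref{G_0toH_0}, which together produce a continuous map $S:G_0\to H_0$ with $S(1_G)=1_H$ satisfying $xE_*(G_0)y\Longleftrightarrow S(x)E_*(H_0)S(y)$ whenever $x,y\in G_0^\omega$ obey $\lim_n x(n)y(n)^{-1}=1_G$. Everything then reduces to two claims about this $S$: that it is already a group homomorphism on $G_0$, and that its kernel is non-archimedean.

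For the homomorphism property I plan to invoke Lemma~\ref{Base}. Its hypothesis (2) is supplied by the structural preamble preceding the theorem: $H_0$ is a locally compact connected metrizable abelian group, hence embeds as a closed subgroup of $\R^n\times\T^\omega$, which sits inside $\R^\omega\times\T^\omega$. For hypothesis (3) I may assume $G_0\neq\{1_G\}$ (otherwise the trivial map $S$ settles the theorem); then the same preamble produces a continuous homomorphism $f:\R^m\to G_0\subseteq G$ with $\overline{f(\R^m)}=G_0$, and this $f$ is necessarily nonzero. Lemma~\ref{Base} then tells me $S|_{f(\R^m)}$ is a homomorphism into $H_0$, and since $S$ is continuous and $f(\R^m)$ is dense in $G_0$, the identity $S(gh)=S(g)S(h)$ extends to all of $G_0$.

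For the kernel I argue by contradiction. Since $\ker(S)$ is closed in the locally compact group $G_0$, it is itself locally compact, and a locally compact group is non-archimedean iff it is totally disconnected (van Dantzig's theorem); so it suffices to rule out any nontrivial connected component $N:=(\ker S)_0$. Suppose $N\neq\{1_G\}$. Then $N$ is a nontrivial closed connected locally compact abelian subgroup of $G_0$, so by the structure theorem $N\cong\R^a\times K$ with $K$ compact connected abelian. I will construct $x\in N^\omega$ with $x(n)\to 1_G$ but whose partial products do not converge in $G_0$. If $a\geq 1$, take $x(n)=\beta(1/n)$ for a closed embedding $\beta:\R\hookrightarrow N$ coming from an $\R$-factor, so that $\prod_{k=1}^n x(k)=\beta(H_n)$ diverges with the harmonic series $H_n:=\sum_{k=1}^n 1/k$. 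If $a=0$ and $K$ is nontrivial, fix a nontrivial continuous character $\chi:K\to\T$, which must be surjective and open because $K$ is compact connected abelian; lift $1/n\in\T$ through $\chi$ to $x(n)\in K$ with $x(n)\to 1_K$, and observe that $\chi(\prod_{k=1}^n x(k))=H_n\bmod 1$ is not Cauchy in $\T$ because $H_{2n}-H_n\to\log 2\notin\Z$. In either case $x\not E_*(G_0)(1_G,1_G,\ldots)$ while $S(x)\equiv 1_H$ forces $S(x)E_*(H_0)(1_H,1_H,\ldots)$, contradicting the equivalence for $S$ recalled above. The main technical delicacy lies in the compact case: openness of $\chi$ is needed to guarantee $x(n)\to 1_K$, and the non-Cauchy behavior of $H_n\bmod 1$ is what blocks convergence; beyond those the argument is a routine compilation of the structure theorem and the preceding lemmas.
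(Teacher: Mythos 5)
Your proposal is correct, and its first half (producing the continuous map $S$ via Remark~\ref{G_0toH_0}, feeding the solenoidal homomorphism $f:\R^m\to G_0$ with dense image into Lemma~\ref{Base}, and extending the homomorphism identity from $f(\R^m)$ to $G_0$ by density and continuity) is exactly the paper's argument. Where you diverge is the kernel step. The paper never passes through total disconnectedness: it works directly with the definition of non-archimedean, fixing a two-sided invariant metric on $\ker(S)$ and a symmetric neighborhood base $V_k$ with $\diam(V_k)\to 0$, noting that $\bigcup_m V_k^m$ is an open subgroup and hence escapes some fixed $V_{k_0}$, and concatenating the resulting finite blocks $g_{k,0},\dots,g_{k,m_k-1}\in V_k$ into a sequence tending to $1_G$ whose partial products are not Cauchy. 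You instead invoke the equivalence ``non-archimedean iff totally disconnected'' for locally compact abelian Polish groups (a fact the paper does cite elsewhere), reduce to a nontrivial connected component $N\cong\R^a\times K$ via the structure theorem, and build explicit harmonic-series witnesses: $\beta(H_n)$ escaping to infinity in an $\R$-factor, or lifts of $\e^{2\pi\i/n}$ through an open surjective character $\chi:K\to\T$ with $\e^{2\pi\i H_n}$ non-Cauchy since $H_{2n}-H_n\to\log 2\notin\Z$. Both arguments are sound; the paper's is more self-contained and in fact works for an arbitrary Polish (TSI) kernel without using local compactness or any structure theory, while yours is more concrete but leans on van Dantzig, the open mapping theorem, and the decomposition of connected locally compact abelian groups. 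Your attention to the openness of $\chi$ (needed to choose lifts $x(n)\to 1_K$) and to the closedness of $N$ in $G_0$ (so non-convergence in $N$ implies non-convergence in $G_0$) correctly handles the only delicate points of your variant.
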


\begin{proof}
If $E(G)\leq_B E(H)$, then $E_*(G)\leq_B E_*(H)$. Without loss of generality we may assume that $G_0$ is nontrivial.
First note that $H_0$ can be embedded into $\R^n\times\T^\omega$.  Thus we may assume without loss of generality that $H_0\subseteq\R^\omega\times\T^\omega$. Let $f$ be a continuous homomorphism from $\R^m$ to $G_0$ with $\overline{f(\R^m)}=G_0$. Then by Lemma \ref{Base} there exists a continuous map $S:G_0\rightarrow H_0$ such that the map $S$ restricted on $f(\R^m)$ is a homomorphism to $H_0$. Since $f(\R^m)$ is dense in $G_0$, we see that $S$ is a homomorphism from $G_0$ to $H_0$.

Then we only need to check that $\ker(S)$ is non-archimedean. Assume toward a contradiction that $\ker(S)$ is not non-archimedean.

Note that $\ker(S)$ is an abelian Polish group. Fix a compatible two sided invariant metric on $\ker(S)$. Let $V_k\subseteq\ker(S),\,k\in\omega$ be an open symmetric neighborhood base of $1_{\ker(S)}=1_G$ with $\lim_k\diam(V_k)=0$. Then there exists a $k_0\in\omega$ such that $V_{k_0}$ does not contain any open subgroup of $\ker(S)$. Since $V_k$ is symmetric, $\bigcup_m V_k^m$ is an open subgroup of $\ker(S)$, so $\bigcup_m V_k^m\nsubseteq V_{k_0}$ for each $k$. Thus we can find an $m_k\in\omega$ and $g_{k,0},\dots,g_{k,m_k-1}\in V_k$ such that
$g_{k,0}g_{k,1}\cdots g_{k,m_k-1}\notin V_{k_0}$.

Denote $M_{-1}=0$ and $M_k=m_0+m_1+\dots+m_k$ for $k\in\omega$. Now for $n\in\omega$, define
$$x(n)=\left\{\begin{array}{ll}g_{k,j}, &n=M_{k-1}+j,0\leq j<m_k,\cr 1_G, & \mbox{otherwise.}\end{array}\right.$$
Therefore $xE_*(G)1_{G^\omega}$ fails.
Note that we have $\lim_nx(n)=1_G$ and $S(x(n))=1_H$ for each $n$. So it is trivial that $S(x)E_*(H_0)S(1_{G^\omega})$, where $S(x)(n)=S(x(n))$, contradicting Lemma~\ref{borereduce}.
\end{proof}

In paricular, if $G$ is compact connected, then the converse of Theorem~\ref{lca} is also true.

\begin{theorem}[Rigid Theorem]\label{cc}
Let $G$ be a compact connected abelian Polish group and $H$ a locally compact abelian Polish group. Then $E(G)\leq_B E(H)$ iff there is a continuous homomorphism $S:G\rightarrow H$ such that $\ker(S)$ is non-archimedean.
\end{theorem}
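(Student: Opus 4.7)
The forward direction is immediate from Theorem~\ref{lca}: since $G$ is connected, $G_0 = G$, so any Borel reduction $E(G) \leq_B E(H)$ produces a continuous homomorphism $S : G \to H_0 \subseteq H$ with non-archimedean kernel.

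For the converse, suppose $S : G \to H$ is a continuous homomorphism with $\ker(S)$ non-archimedean. The plan is to factor the desired reduction through the image $K := S(G) \subseteq H$. If $G$ is trivial the conclusion holds vacuously, so assume $G$ is nontrivial. Then $K$ is also nontrivial, because connectedness of $G$ together with the non-archimedean kernel forces $\ker(S)\neq G$ (a connected Polish group admitting a neighborhood base of $1$ consisting of open, hence clopen, subgroups must be trivial). As the continuous image of $G$, the subgroup $K$ is compact, connected, and abelian, hence closed in $H$, so Proposition~\ref{closed subgroup} yields $E(K) \leq_B E(H)$. It remains to show $E(G) \leq_B E(K)$.

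For this step I apply Lemma~\ref{[0,1-embed]} with $\psi := S : G \to K$ (surjective by construction) and $\varphi := \mathrm{id}_K : K \to K$. All three groups are abelian, hence TSI; the composition $\varphi \circ \psi = S$ is surjective onto $K$ and has non-archimedean kernel by hypothesis; the only hypothesis left to verify is that $[0,1]$ embeds into $K$.

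This last point is the main obstacle. Since $K$ is a nontrivial compact connected metrizable abelian group, the structure facts recalled just before Theorem~\ref{lca} give a continuous homomorphism $f : \R \to K$ with dense image, while Pontryagin duality yields a nontrivial continuous character $\chi : K \to \T$ (whose image, being a nontrivial connected closed subgroup of $\T$, must equal $\T$). The composition $\chi \circ f : \R \to \T$ is therefore a nontrivial continuous homomorphism, of the form $t \mapsto \e^{2\pi \i a t}$ with $a \neq 0$, and thus injective on $[0, 1/(2|a|)]$. Hence $f$ is injective on this compact interval and, since $K$ is Hausdorff, restricts to a topological embedding of a nondegenerate interval into $K$. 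This supplies the required copy of $[0,1]$ in $K$, so Lemma~\ref{[0,1-embed]} gives $E(G) \leq_B E(K)$; chaining with Proposition~\ref{closed subgroup} concludes $E(G) \leq_B E(K) \leq_B E(H)$.
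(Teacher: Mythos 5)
Your proposal is correct and follows essentially the same route as the paper: forward direction from Theorem~\ref{lca} using $G=G_0$, and for the converse, factoring through the compact connected image $S(G)$, applying Proposition~\ref{closed subgroup} and Lemma~\ref{[0,1-embed]} after exhibiting an arc in $S(G)$. The only (harmless) divergence is that you verify the arc via a nontrivial character composed with the dense one-parameter subgroup, where the paper notes directly that $\ker(f)$ is a proper closed, hence discrete, subgroup of $\R$; you also make explicit the trivial-image case that the paper leaves implicit.
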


\begin{proof}
Let $S$ be a continuous homomorphism from $G$ to $H$ such that $\ker(S)$ is non-archimedean. Since $G$ is compact, $S(G)$ is a compact, thus closed subgroup of $H$. So we have $E(S(G))\le_BE(H)$.

Note that $S(G)$ is also a compact connected abelian Polish group. Let $f$ be a continuous homomorphism $f:\R\rightarrow S(G)$ such that $\overline{f(\R)}=S(G)$. Then $\ker(f)$ is a proper closed subgroup of $\R$. Hence $\ker(f)$ is a discrete group. This gives that the interval $[0,1]$ embeds in $S(G)$. Then by Lemma~\ref{[0,1-embed]}, we get that $E(G)\leq_B E(S(G))\le_BE(H)$.

On the other hand, if $E(G)\le_BE(H)$, by Theorem~\ref{lca}, there is a continuous homomorphism $S:G_0\to H_0$ such that $\ker(S)$ is non-archimedean. Since $G$ is connected, we have $G=G_0$.
\end{proof}

\begin{corollary}
Let $G$ be a compact connected abelian Polish group and $H$ a locally compact abelian Polish group. Suppose $H_0\cong\R^n\times K$, where $K$ is a compact connected abelian group. Then $E(G)\le_BE(H)$ iff $E(G)\le_BE(K)$.
\end{corollary}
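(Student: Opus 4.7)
The plan is to extract both implications directly from the Rigid Theorem (Theorem~\ref{cc}), using the structural hypothesis $H_0\cong\R^n\times K$ only to constrain where a potential homomorphism $G\to H$ can land.

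For the direction $E(G)\le_BE(K)\Rightarrow E(G)\le_BE(H)$, I will first note that under the given isomorphism the $K$-factor corresponds to a compact (hence closed) subgroup of $H_0$, and $H_0$ is itself closed in $H$; thus $K$ embeds topologically as a closed subgroup of $H$, so Proposition~\ref{closed subgroup} yields $E(K)\le_BE(H)$, and transitivity of $\le_B$ completes this direction.

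For the converse, assuming $E(G)\le_BE(H)$, I will apply Theorem~\ref{cc} to obtain a continuous homomorphism $S:G\to H$ with $\ker(S)$ non-archimedean. Since $G$ is connected, $S(G)\subseteq H_0$; identifying $H_0$ with $\R^n\times K$ and writing $S=(S_1,S_2)$, the key observation is that $S_1(G)$ is a compact connected subgroup of $\R^n$ and therefore trivial. Consequently $S$ factors through the $K$-coordinate as a continuous homomorphism $S_2:G\to K$ with $\ker(S_2)=\ker(S)$ still non-archimedean, and a second invocation of Theorem~\ref{cc}---now with the compact group $K$ playing the role of $H$---gives $E(G)\le_BE(K)$.

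The only slightly delicate point is the observation that $S(G)$ has trivial projection to the $\R^n$ summand, but this is forced automatically by compactness and connectedness of $G$ together with the fact that $\R^n$ contains no nontrivial compact subgroup. Once that is in place, both implications are short consequences of the Rigid Theorem together with the closed-subgroup reduction, so I do not anticipate any real obstacle.
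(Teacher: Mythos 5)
Your proposal is correct and follows essentially the same route as the paper: the backward direction via the closed-subgroup reduction $E(K)\le_BE(H_0)\le_BE(H)$, and the forward direction by obtaining $S:G\to H$ with non-archimedean kernel from the Rigid Theorem, noting $S(G)\subseteq H_0$ by connectedness, killing the $\R^n$-projection by compactness, and reapplying the Rigid Theorem to $\pi_K\circ S:G\to K$. No gaps.
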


\begin{proof}
$(\Leftarrow)$ part is trivial, since $E(K)\le_BE(H_0)\le_BE(H)$.

$(\Rightarrow)$. If $E(G)\le_BE(H)$, then there exists a continuous homomorphism $S:G\to H$ such that $\ker(S)$ is non-archimedean. So $S(G)$ is a connected compact subgroup of $H$, thus $S(G)\subseteq H_0$. Without loss of generality, we assume that $H_0=\R^n\times K$. Let $\pi:H_0\to\R^n$ and $\pi_K:H_0\to K$ be canonical projections. Then $\pi(S(G))$ is a compact subgroup of $\R^n$, so $\pi(S(G))=\{0\}$. It follows that $\ker(\pi_K\circ S)=\ker(S)$. Applying Theorem~\ref{cc} on $\pi_K\circ S:G\to K$, we have $E(G)\le_BE(K)$.
\end{proof}

Recall that a topological group $G$ is {\it totally disconnected} if $G_0=\{1_G\}$. For any locally compact abelian Polish group, it is totally disconnected iff it is non-archimedean (cf.~\cite[Theorem 1.34]{HM13}).

For every normal space $X$, denoted by $\dim(X)$ the {\it covering dimension} of $X$, where $\dim(X)$ is an integer $\ge-1$ or the ``infinite number $\infty$''. We omit the definition of covering dimension since it is very complicated~(see page 54 of~\cite{RE}). We recall the following useful facts concerning compact abelian group $G$: $\dim(G)=n<\infty$ iff $G$ has a totally disconnected closed subgroup $\Delta$ such that $G/\Delta\cong \T^n$ iff there is a compact totally disconnected subgroup $N$ of $G$ and a continuous surjective homomorphism $\varphi :N\times\R^n\rightarrow G$ which has a discrete kernel~(see Theorem 8.22 and Corollary 8.26 of~\cite{HM13}). In this case, we say that $G$ is {\it finite dimensional}~(cf.~\cite[Definitions 8.23]{HM13}). Clearly, $\dim(G)=0$ iff $G$ is totally disconnected. For more details on compact abelian groups, see~\cite{HM13}.

Now we recall two equivalence relations $E_0^\omega$ and $E(M;0)$ (see~\cite[Definition 3.2]{ding12}). The equivalence relation $E_0^\omega$ on $2^{\omega\times\omega}$ defined by
$$x E_0^\omega y\iff\forall k\,\exists m\,\forall n\geq m\,(x(n,k)=y(n,k)).$$
Fix a metric space $M$. The equivalence relation $E(M;0)$ on $M^\omega$ defined by
$$x E(M;0) y\iff\lim\limits_n d(x(n),y(n))=0.$$
From the above discussions, we can establish the following theorem.

\begin{theorem}\label{n-dim}
Let $G,H$ be locally compact abelian Polish groups.
 \begin{enumerate}
    \item If $G$ is non-archimedean, then $E(G)\leq_B E_0^\omega$.
    \item If $G$ is not non-archimedean, then $E(\R)\leq_B E(G)$.
    \item If $G$ is not non-archimedean and $G_0$ is open, then $E(G)\sim_B E(G_0)$.
    \item If $n$ is a positive integer, then $E(\T^n)\leq_B E(G)$ iff $\T^n$ embeds in $G$.
    \item If $n$ is a positive integer and $G$ is compact, then $G$ is $n$-dimensional iff $E(\R^n)<_B E(G)\leq_B E(\T^n)$.
  \end{enumerate}
\end{theorem}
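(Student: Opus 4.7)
The plan is to attack the five parts by leveraging the structure theory of locally compact abelian groups together with the Rigid Theorem (Theorem~\ref{cc}), Theorem~\ref{lca}, and Lemmas~\ref{[0,1-embed]} and~\ref{borereduce}.

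For~(1), I would fix a decreasing neighborhood base $G\supseteq U_0\supseteq U_1\supseteq\cdots$ of open subgroups at $1_G$ together with a Borel transversal of each quotient $G/U_k$, and let $\theta(x)(n,k)$ be the transversal representative of $x(0)x(1)\cdots x(n)$ modulo $U_k$; then $xE_*(G)y$ is exactly the requirement that $\theta(x)(\cdot,k)=\theta(y)(\cdot,k)$ eventually for every $k$, i.e.\ $\theta(x)\,E_0^\omega\,\theta(y)$. Part~(4) follows directly from the Rigid Theorem: the reduction produces a continuous homomorphism $S:\T^n\to G$ whose kernel is a non-archimedean closed subgroup of $\T^n$, hence finite, so $S(\T^n)\cong\T^n/\ker(S)\cong\T^n$ is a closed subgroup of $G$, and the converse is Proposition~\ref{closed subgroup}.

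For~(2), the structure theory supplies a nonzero continuous homomorphism $f:\R\to G_0$. Either $f$ realises $\R$ or $\T$ as a closed subgroup of $G$ (in which case Proposition~\ref{closed subgroup} gives $E(\R)\leq_B E(G)$ directly), or $f$ is injective with dense image in a compact connected $L\subseteq G_0$, in which case Lemma~\ref{[0,1-embed]} applied with $\psi=f$ and a continuous surjection $\varphi:L\to\T$ (so $\varphi\circ f:\R\to\T$ is surjective with discrete kernel) gives $E(\R)\leq_B E(L)\leq_B E(G)$. For~(3), the inclusion $E(G_0)\leq_B E(G)$ is Proposition~\ref{closed subgroup}; for the reverse, openness of $G_0$ makes $G/G_0$ countable discrete, so~(1) yields $E(G/G_0)\leq_B E_0^\omega$, while nontriviality of $G_0$ and a standard embedding of $E_0^\omega$ into $E(\R)$ gives $E_0^\omega\leq_B E(G_0)$. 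Using a Borel cross section $s:G/G_0\to G$ to write $x(n)=s(\sigma(x)(n))\cdot h(x)(n)$ with $h(x)(n)\in G_0$, one then assembles $E(G)\leq_B E(G_0)\times E(G/G_0)\leq_B E(G_0)$.

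For~(5), the structure theorem supplies a totally disconnected closed subgroup $\Delta\leq G$ with $G/\Delta\cong\T^n$, together with a continuous surjection $\varphi:N\times\R^n\to G$ with $N$ compact totally disconnected and discrete kernel. The composition $\R^n\hookrightarrow N\times\R^n\xrightarrow{\varphi}G\to G/\Delta\cong\T^n$ is a continuous surjection with a discrete (lattice) kernel, so Lemma~\ref{[0,1-embed]} delivers $E(\R^n)\leq_B E(G)$. The strict inequality $E(G)\not\leq_B E(\R^n)$ comes from Theorem~\ref{lca}: such a reduction would give $S:G_0\to\R^n$ continuous homomorphism with non-archimedean kernel, but $G_0$ is nontrivial compact connected, so $S(G_0)=0$ and $\ker(S)=G_0$ is not non-archimedean, a contradiction. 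For $E(G)\leq_B E(\T^n)$, the restriction $G_0\to G/\Delta\cong\T^n$ has non-archimedean kernel $G_0\cap\Delta$, so the Rigid Theorem gives $E(G_0)\leq_B E(\T^n)$; this is combined with $E(G/G_0)\leq_B E_0^\omega\leq_B E(\T^n)$ from~(1) and a structural decomposition of $G$ along a Borel cross section of $G\to G/G_0$ to yield $E(G)\leq_B E(\T^n)$. Conversely, given $E(\R^n)<_B E(G)\leq_B E(\T^n)$, Theorem~\ref{lca} applied to both reductions produces continuous homomorphisms $\R^n\to G_0\to\T^n$ with non-archimedean kernels whose composition must have rank exactly $n$, pinning down $\dim G_0=\dim G=n$.

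The main obstacle will be the step $E(G)\leq_B E(\T^n)$ in~(5) when $G$ is compact $n$-dimensional but not connected: the exact sequence $0\to G_0\to G\to G/G_0\to 0$ need not split topologically and $G_0$ need not be open in $G$ (for example when $G/G_0$ is infinite profinite such as $\Z_p$), so~(3) does not apply directly. The reduction must be built by hand, using a Borel cross section of $G\to G/G_0$ to interleave the connected-part data (handled by the Rigid Theorem on $G_0$) with the totally disconnected component-group data (absorbed into $E(\T^n)$ via~(1) and $E_0^\omega\leq_B E(\T^n)$), while verifying that partial products in $G$ converge iff their $G_0$ and $G/G_0$ parts converge separately.
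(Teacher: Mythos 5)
Your parts (2) and (4) follow the paper's route and are fine, but there are genuine gaps elsewhere. The most serious is the step you yourself flag as ``the main obstacle'': $E(G)\le_B E(\T^n)$ in (5) when the compact $n$-dimensional $G$ is not connected. Your proposed workaround --- interleaving $G_0$-data and $G/G_0$-data along a Borel cross section --- fails for exactly the reason you name (the extension $0\to G_0\to G\to G/G_0\to 0$ need not split and $G_0$ need not be open), and you give no repair. The paper dissolves the problem: for \emph{any} compact abelian Polish $G$ with $\dim(G)=n$ there is a totally disconnected closed subgroup $\Delta$ of the whole group $G$ with $G/\Delta\cong\T^n$ (dually, a maximal free subset of the countable group $\widehat G$ generates a copy of $\Z^n$ with torsion quotient; see Theorem 8.22 and Corollary 8.26 of [HM13]), and then Lemma~\ref{[0,1-embed]} applied to the quotient map $G\to G/\Delta\cong\T^n$ gives $E(G)\le_B E(\T^n)$ in one line, with no decomposition of $G$ into connected and totally disconnected parts. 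The same criticism applies to your argument for (3): with an arbitrary Borel cross section $s$ one has $x(n)y(n)^{-1}=s(\sigma(x)(n))s(\sigma(y)(n))^{-1}h(x)(n)h(y)(n)^{-1}$, and $s(a)s(b)^{-1}$ depends on the pair $(a,b)$ rather than on $ab^{-1}$ alone, so convergence of $x(n)y(n)^{-1}$ does not force convergence of $h(x)(n)h(y)(n)^{-1}$ (a non-homomorphic section of $\R\times\Z\to\Z$ already gives a counterexample). You need the topological splitting $G\cong G_0\times G/G_0$ valid when $G_0$ is open ([HAR, 24.45]), after which [DZ, Corollary 3.6] yields $E(G_0\times G/G_0)\sim_B E(G_0)$ directly; note also that $E(G_0)\times E(G/G_0)\le_B E(G_0)$ does not follow formally from $E(G/G_0)\le_B E(G_0)$.

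Two further points. In (1), your explicit reduction is wrong as stated: $\theta(x)(n,k)=\theta(y)(n,k)$ says that $x(0)\cdots x(n)$ and $y(0)\cdots y(n)$ lie in the same coset of $U_k$, so ``$\theta(x)(\cdot,k)=\theta(y)(\cdot,k)$ eventually for every $k$'' characterizes convergence of the difference of partial products to $1_G$, not mere convergence. Already for $G=\Z$, $x=(1,0,0,\dots)$ and $y=0$ satisfy $xE_*(G)y$ while $\theta(x)\,E_0^\omega\,\theta(y)$ fails; one must instead reduce, column by column, the relation ``the coset sequence is eventually constant'' on the countable discrete group $G/U_k$ (the paper simply cites [DZ, Theorem 3.5(3)]). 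Finally, in the converse direction of (5), ``the composition must have rank exactly $n$'' is not yet an argument: you must first exclude $\dim(G)=\infty$ (the paper embeds $[0,1]^\omega$ into $G_0/\ker(S)$ and hence into $\T^n$, a contradiction) and then get $m=\dim(G)=n$ by running the forward direction for $m$ and applying [DZ, Theorem 6.19] to $E(\R^m)\le_B E(\T^n)$ and $E(\R^n)\le_B E(\T^m)$.
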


\begin{proof}
(1) It follows from~\cite[Theorem 3.5.(3)]{DZ}.

(2) Note that $G$ is not totally disconnected (cf.~\cite[Theorem 1.34]{HM13}), so $G_0$ contains at least two points. We have $G_0\cong\R^n\times K$, where $K$ is a compact connected abelian group. If $n>0$, it is trivial that $E(\R)\le_BE(G)$. By Propostion~\ref{closed subgroup}, $E(K)\le_BE(G_0)\le_BE(G)$. Thus we may assume that $G$ is compact connected and $G\subseteq\T^\omega$. Note that there is a continuous homomorphism $f:$ $\R\rightarrow G$ such that $\overline{f(\R)}=G$. For $g\in G\subseteq\T^\omega$ and $p\in\omega$, let $\phi_p(g)=g(p)$. Since $G$ contains at least two points, we can find ${p_0}\in\omega$ such that $\phi_{p_0}(f(\R))\ne\{1_\T\}$, so $\phi_{p_0}(f(\R))=\T$. By~\cite[Corollary 8.24]{HM13}, the interval $[0,1]$ embeds in $G$. Then by Theorem~\ref{[0,1-embed]}, we have $E(\R)\le_BE(G)$.

(3) By~\cite[\S 24.45]{HAR}, we have $G\cong G_0\times G/G_0$. Since $G_0$ is open, $G/G_0$ is countable and discrete. By~\cite[Corollary 3.6]{DZ}, this implies that $E(G_0\times G/G_0)\sim_B E(G_0)$ and thus $E(G)\sim_B E(G_0)$.

(4) The ``if'' part follows Proposition~\ref{closed subgroup}.
Assume that $E(\T^n)\le_BE(G)$. By Theorem~\ref{lca} and~\cite[Corollary 2.3.4]{gao}, there is a closed subgroup $\Delta$ of $\T^n$ such that the group $\T^n/\Delta$ can be embedded in $G$, where $\Delta$ is non-archimedean. It is obvious that $\T^n/\Delta$ is a locally connected, connected and compact abelian Polish group. By~\cite[Proposition 8.17]{Ar}, $\T^n/\Delta\cong\T^n$.

(5) If $n=\dim(G)$, then we have $(N\times\R^n)/\Delta_1\cong G$ and $G/\Delta_2\cong \T^n$, where $N,\Delta_1$, and $\Delta_2$ are totally disconnected, and hence are non-archimedean. Then Proposition~\ref{closed subgroup} and Lemma~\ref{[0,1-embed]} imply that
$$E(\R^n)\le_BE(N\times\R^n)\leq_B E(G)\leq_B E(\T^n).$$
So we only need to show that $E(G)\nleq_B E(\R^n)$. To see this, assume toward a contradiction that $E(G)\leq_B E(\R^n)$. By Theorem \ref{lca}, there exists a continuous homomorphism $S:G_0\rightarrow\R^n$ such that $\ker(S)$ is non-archimedean.  Note that $\R^n$ has no nontrivial compact connected subgroup. So this implies that $S(G_0)=\{0\}$, contradicting that $\ker(S)$ is non-archimedean.

On the other hand, suppose $E(\R^n)<_B E(G)\leq_B E(\T^n)$. Let $m=\dim(G)$. By (1) we have $m>0$. Assume for contradiction that $m=\infty$, then there exists a continuous homomorphism $S:G_0\rightarrow\T^n$ such that $\ker(S)$ is non-archimedean. Then we have $\dim(G_0/\ker(S))=\infty$, and hence $[0,1]^\omega$ embeds into $G_0/\ker(S)$ (cf.~\cite[Corollary 8.24]{HM13}). By~\cite[Corollary 2.3.4]{gao}, $S$ induce an embedding from $G_0/\ker(S)$ to $\T^n$. So $[0,1]^\omega$ embeds into $\T^n$, contradicting that $n$ is finite. Therefore, we have $0<m<\infty$, and hence $E(\R^m)<_BE(G)\le_BE(\T^m)$. Then~\cite[Theorem 6.19]{DZ} gives $m=n$.
\end{proof}

\begin{remark}
Let $G$ and $H$ be two locally compact abelian Polish groups. Suppose that $G_0$ is an open subgroup of $G$, and that $G_0$ is compact or $G_0\cong\R$. Then theorems~\ref{lca},~\ref{cc}, and~\ref{n-dim}.(2)--(3) imply that $E(G)\leq_B E(H)$ iff there is a continuous homomorphism $S:G_0\rightarrow H_0$ such that $\ker(S)$ is non-archimedean. This generalizes Rigid Theorem, i.e., Theorem~\ref{cc}. We don't know whether this can be generalized to all locally compact abelian Polish groups.
\end{remark}

\begin{question}
Does the converse of Theorem~\ref{lca} hold for all locally compact abelian Polish groups?
\end{question}

\begin{question}
Let $G$ be a locally compact abelian Polish group. If $G$ is not non-archimedean, does $E(G)\sim_B E(G_0)$?
\end{question}

\section{$P$-adic solenoids}

Let $P=(P(0),P(1),\dots)$ be a sequence of integers greater than $1$. Recall that the {\it $P$-{adic solenoid}} $S_P$ is defined by
$$S_P=\{g\in\T^\omega:\forall l\,(g(l)=g(l+1)^{P(l)})\}.$$
In particular, if for each $i$, $P(i)$ is a prime number, then the $P$-{adic solenoid} is denoted by $\Sigma_P$~(cf.~\cite{Gu}). Let $\mathcal{P}$ denote the set of all primes. The group $S_P$ is topologically isomorphic to $\Sigma_{P'}$ for some $P'\in\mathcal{P}^\omega$ satisfying that $P(l)=P'(i_l)\cdots P'(i_{l+1}-1)$, where $0=i_0<i_1<\cdots<i_l<\cdots$. For example, we have $S_{(4,6,8,9,\dots,9,\dots)}\cong\Sigma_{(2,2,2,3,2,2,2,3,3,\dots,3,3,\dots)}$.

It is well known that, the group $\Sigma_P$ is a compact connected abelian group which is neither locally connected~(cf.~\cite{Gu}), nor arcwise connected~(see~\cite[Theorem 8.27]{Ar}). Every nontrivial proper closed subgroup $H$ of a $P$-{adic solenoid} is totally disconnected~(cf.~\cite[Proposition 2.7]{BK}), and thus $H$ is non-archimedean. Clearly, $\Sigma_P$ is an $1$-dimensional and metrizable group.

Denote $\Omega=\{\R,\T,\Sigma_P:P\in\mathcal{P}^\omega\}$.

\begin{lemma}\label{product}
Let $m,n\in\N^+$ and let $G_1, G_2,\dots,G_m,H_1,H_2\dots,H_n\in\Omega$. Then the following are equivalent:
\begin{enumerate}
  \item [(1)]$E(G_1\times G_2\times\dots\times G_m)\leq_B E(H_1\times H_2\times\dots \times H_n)$.
  \item [(2)]There is a injective map $\theta^*:\{1,2,\dots,m\}\rightarrow\{1,2,\dots,n\}$ such that $E(G_i)\leq_B E(H_{\theta^*(i)})$ for $1\leq i\le m$.
\end{enumerate}
In particular, $E(G_1^m)\leq_B E(H_1^n)$ iff $m\le n$ and $E(G_1)\leq_B E(H_1)$.
\end{lemma}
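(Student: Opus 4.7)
For $(\Leftarrow)$, the key observation is that Borel reductions combine coordinate-wise. Given Borel reductions $\theta_i:G_i^\omega\to H_{\theta^*(i)}^\omega$ witnessing $E(G_i)\leq_B E(H_{\theta^*(i)})$, identify $(G_1\times\cdots\times G_m)^\omega$ with $G_1^\omega\times\cdots\times G_m^\omega$ and set $\theta((x_i)_i)=(\theta_i(x_i))_i$; since convergence in a product group is coordinate-wise, $\theta$ witnesses $E(G_1\times\cdots\times G_m)\leq_B E(H_{\theta^*(1)}\times\cdots\times H_{\theta^*(m)})$. Injectivity of $\theta^*$ then realizes the latter product as a closed subgroup of $H_1\times\cdots\times H_n$ by padding the unused coordinates with the identity, and Proposition~\ref{closed subgroup} finishes the argument.

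For $(\Rightarrow)$, set $G=\prod_iG_i$ and $H=\prod_jH_j$; both are connected. Theorem~\ref{lca} furnishes a continuous homomorphism $S:G\to H$ with $0$-dimensional (non-archimedean) kernel. Partition the factor indices into $A_\R,A_\T,A_\Sigma$ on the $G$-side and $B_\R,B_\T,B_\Sigma$ on the $H$-side, and form the bipartite compatibility graph whose edges $(i,j)$ record when $E(G_i)\leq_B E(H_j)$. Using Proposition~\ref{closed subgroup}, Lemma~\ref{between}, Theorem~\ref{n-dim}.(4), Theorem~\ref{solenoid}, together with the Pontryagin-duality fact that every continuous homomorphism $\T\to\Sigma_Q$ is trivial and the observation that any compact subgroup of $\R$ is trivial, the edges come out as follows: $\R$ pairs with every $H_j$; $\T$ pairs only with $H_j=\T$; and $\Sigma_{P_i}$ pairs with $H_j=\T$, or with $H_j=\Sigma_{Q_j}$ exactly when $Q_j\preceq P_i$. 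The plan is to verify Hall's marriage condition on this graph and extract $\theta^*$. For subsets $I\subseteq\{1,\dots,m\}$ meeting $A_\R$ one has $N(I)=\{1,\dots,n\}$, reducing Hall to the global bound $m\leq n$: writing $G\cong\R^{|A_\R|}\times K_G$ and $H\cong\R^{|B_\R|}\times K_H$ with $K_G,K_H$ compact connected, Theorem~\ref{n-dim}.(5) supplies $E(\R^{\dim K_G})\leq_B E(K_G)$ and $E(K_H)\leq_B E(\T^{\dim K_H})$, and the coordinate-wise product argument of the previous paragraph chains $E(\R^m)\leq_B E(G)\leq_B E(H)\leq_B E(\R^{|B_\R|}\times\T^{\dim K_H})$, whence $m\leq n$ by \cite[Theorem~6.19]{DZ}.

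The core step is Hall's condition for $I\subseteq A_\T\cup A_\Sigma$. Let $G_I=\prod_{i\in I}G_i$, compact connected of dimension $|I|$. The restriction $S|_{G_I}:G_I\to H$ still has non-archimedean kernel, so $S(G_I)$ is compact connected of dimension exactly $|I|$ and sits inside the maximal compact subgroup $K_H$ of $H$. Let $J=\{j:\pi_j\circ S|_{G_I}\neq 0\}$; since $S(G_I)\subseteq K_H$ we get $J\subseteq B_\T\cup B_\Sigma$ for free, and $S(G_I)\subseteq\prod_{j\in J}H_j$, a group of dimension $|J|$, so monotonicity of covering dimension yields $|I|\leq|J|$. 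For each $j\in J$, some restriction $\pi_j\circ S|_{G_i}:G_i\to H_j$ with $i\in I$ is nonzero; since $G_i$ is $1$-dimensional compact connected, its kernel is a proper closed, hence $0$-dimensional, subgroup of $G_i$, which is non-archimedean, and Theorem~\ref{cc} (the Rigid Theorem) delivers $E(G_i)\leq_B E(H_j)$, making $(i,j)$ an edge and $j\in N(I)$. Hence $|N(I)|\geq|J|\geq|I|$; Hall's marriage theorem now produces the desired injection $\theta^*$. The chief obstacle is this dimension-counting step: one must simultaneously leverage compactness of $G_I$ to forbid $\R$-projections, use that $\ker(S|_{G_I})$ stays $0$-dimensional to pin down $\dim S(G_I)=|I|$, and invoke Rigidity to upgrade each surviving nonzero coordinate projection into an actual compatibility edge.
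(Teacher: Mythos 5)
Your proposal is correct, and its overall architecture coincides with the paper's: obtain a continuous homomorphism $S:G\to H$ with non-archimedean kernel from Theorem~\ref{lca}, observe that compact factors cannot project nontrivially into $\R$-coordinates, set up a bipartite graph between the compact $G_i$'s and the $H_j$'s, verify Hall's condition, apply Hall's marriage theorem, upgrade each matched nonzero coordinate homomorphism to a reduction via Theorem~\ref{cc}, and handle the $\R$-factors by the separate bound $m\le n$. The one place where you genuinely diverge is the verification of Hall's condition, which is the heart of the argument. The paper restricts $S$ to the subproduct $G^{K}$, notes that its image lies in $H^{N(K)}$, applies the Rigid Theorem to get $E(G^{K})\le_B E(H^{N(K)})$, sandwiches this between $E(\R^{|K|})$ and $E(\T^{|N(K)|})$ using Theorem~\ref{n-dim}.(5), and then invokes \cite[Theorem 6.19]{DZ} to conclude $|K|\le|N(K)|$ --- i.e., it converts the combinatorial inequality into a statement about Borel reducibility of $E(\R^{k})$ into $E(\T^{l})$. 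You instead argue purely inside the category of compact groups: since $\ker(S|_{G_I})$ is totally disconnected, $\dim S(G_I)=|I|$, and since $S(G_I)$ sits inside the $|J|$-dimensional group $\prod_{j\in J}H_j$, monotonicity of dimension for closed subgroups of compact groups gives $|I|\le|J|\le|N(I)|$. Your route is shorter and avoids a second pass through the reducibility machinery, at the cost of quietly using the additivity $\dim(G)=\dim(N)+\dim(G/N)$ for compact groups and monotonicity of dimension under closed subgroups; both are standard (and follow from the rank--dimension correspondence the paper records in Section 4), but you should cite or prove them rather than gesture at ``monotonicity of covering dimension,'' which fails for general subspaces. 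Everything else --- the coordinate-wise product reduction for $(2)\Rightarrow(1)$, the description of the compatibility edges, and the derivation of $m\le n$ from \cite[Theorem 6.19]{DZ} --- matches the paper in substance.
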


\begin{proof}
$(2)\Rightarrow(1)$ is obvious. We only prove $(1)\Rightarrow(2)$.

Denote $G=G_1\times G_2\times\dots\times G_m$ and $H=H_1\times H_2\times\dots\times H_n$. For $1\leq i\le m$, let $e^i$ be the canonical injection of $G_i$ into $G_1 \times\cdots\times G_m$, i.e., $e^i(g)=(1_{G_1},\dots,1_{G_{i-1}},g,1_{G_{i+1}},\dots,1_{G_m})$.

Suppose $E(G)\le_B E(H)$. Since $G$ and $H$ are both connected, by Theorem~\ref{lca}, there is a continuous homomorphism $S:G\rightarrow H$ such that $\ker(S)$ is non-archimedean. For each $1\leq j\le n$, let $\pi_{j}$ be the canonical projection from $H$ onto $H_j$.

Note that, except for $\R$, all groups in $\Omega$ are compact. By rearranging, we may assume that there is an $i_0\le m$ such that, $G_i$ is compact for $1\le i\le i_0$, and $G_i=\R$ for $i_0<i\le m$.


For any $1\leq i\le i_0$, since $\ker(S)$ is non-archimedean, there exists $j$ satisfying that $\pi_{j}(S(e^i(G_i)))\not=\{1_{H_j}\}$. Note that $H_j$ has no nontrivial proper connected compact subgroup. It follows that $\pi_{j}(S(e^i(G_i)))=H_j$. Now we construct a bipartite graph $G[X,Y]$ as follows. Let $X=\{G_1,G_2,\dots,G_{i_0}\}$,
$$Y=\{H_j:\exists i\,(1\le i\le i_0\mbox{ and }\pi_{j}(S(e^i(G_i)))=H_j)\}.$$
For $G_i\in X$ and $H_j\in Y$, we put an edge between $G_i$ and $H_j$ if $\pi_{j}(S(e^i(G_i)))=H_j$. Given $K\subseteq X$, we denote the set of all neighbors of the vertices in $K$ by $N(K)$.

Next we show that $\left|N(K)\right|\geq\left|K\right|$ for all $K\subseteq X$. Given $K\subseteq X$, denote
$$G^{K}=\{x\in G: x(i)=1_{G_i}\mbox{ for all } G_i\notin K\},$$
$$H^{N(K)}=\{z\in H: z(j)=1_{H_j}\mbox{ for all } H_j\notin N(K)\}.$$
Then the restriction of $S$ on $G^{K}$ is a continuous homomorphism to $H^{N(K)}$.
By Theorem~\ref{cc}, $E(G^{K})\le_B E(H^{N(K)})$. Again by Theorem~\ref{n-dim}.(5), this implies $E(\R^{\left|K\right|})\leq_B E(\T^{\left|N(K)\right|})$. Then~\cite[Theorem 6.19]{DZ} gives $\left|N(K)\right|\geq\left|K\right|$.

By Hall's theorem (cf.~\cite[Theorem 16.4]{BM}), there is a injective map $\theta^*:\{1,2,\dots,i_0\}\rightarrow\{1,2,\dots,n\}$ such that $\pi_{\theta^*(i)}(S(e^i(G_i)))=H_{\theta^*(i)}$. Since every proper closed subgroup of $G_i$ is non-archimedean, from Theorem~\ref{cc}, we have $E(G_i)\le_BE(H_{\theta^*(i)})$.

In the end, since $\dim(G)=m$ and $\dim(H)=n$, by Theorem~\ref{n-dim}.(5), we have $E(\R^m)\le_BE(\T^n)$. So $m\le n$. Since $E(\R)\le_BE(H_j)$ for each $j$, we can trivially extend $\theta^*$ to an injection from $\{1,2,\dots,m\}$ to $\{1,2,\dots,n\}$ such that $E(G_i)\le_BE(H_{\theta^*(i)})$ for each $i$.
\end{proof}

Let $P$ and $Q$ be in $\mathcal{P}^\omega$. We write $Q\preceq P$ provided there is a co-finite subset $A$ of $\omega$ and an injection $f:A\to\omega$ such that $Q(n)=P(f(n))$ for each $n\in A$~(for more details, see~\cite{Gu,AG,Pr}).

\begin{lemma}[folklore]
Let $P$ and $Q$ be in $\mathcal{P}^\omega$. Then the following are equivalent:
\begin{enumerate}\label{equivalence}
\item[(1)] There is a nonzero continuous homomorphism  $f:\Sigma_P\rightarrow\Sigma_Q$.
\item[(2)] There is a surjective continuous homomorphism  $g:\Sigma_P\rightarrow\Sigma_Q$.
\item[(3)] There is a surjective continuous map $h:\Sigma_P\rightarrow\Sigma_Q$.
\item[(4)] $Q\preceq P$.
\end{enumerate}
\end{lemma}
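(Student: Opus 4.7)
The plan is to establish (2) $\Rightarrow$ (1) and (2) $\Rightarrow$ (3) (both immediate), (1) $\Rightarrow$ (2) by a connectedness argument, the equivalence (1) $\Leftrightarrow$ (4) by Pontryagin duality, and the main content (3) $\Rightarrow$ (1) by invoking a classical homotopy-theoretic fact about solenoids.

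For (1) $\Rightarrow$ (2): if $f:\Sigma_P\to\Sigma_Q$ is a nonzero continuous homomorphism, then $f(\Sigma_P)$ is a closed connected subgroup of $\Sigma_Q$ with more than one element. The fact recalled just before the lemma --- that every nontrivial proper closed subgroup of a $P$-adic solenoid is totally disconnected --- forces $f(\Sigma_P)=\Sigma_Q$.

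For the equivalence (1) $\Leftrightarrow$ (4), I would invoke Pontryagin duality. The dual group $\widehat{\Sigma_P}$ is canonically isomorphic to $\Z[1/P]:=\{m/\prod_{i<n}P(i):m\in\Z,\,n\in\omega\}\subseteq\Q$ (it is the direct limit of $\Z\xrightarrow{P(0)}\Z\xrightarrow{P(1)}\cdots$), and likewise $\widehat{\Sigma_Q}\cong\Z[1/Q]$. By Pontryagin duality, continuous homomorphisms $\Sigma_P\to\Sigma_Q$ correspond bijectively to group homomorphisms $\phi:\Z[1/Q]\to\Z[1/P]$, and such a $\phi$ is determined by the single value $r:=\phi(1)\in\Z[1/P]$ subject to the divisibility constraints $r/\prod_{i<n}Q(i)\in\Z[1/P]$ for every $n$. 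A direct prime-by-prime analysis of these constraints shows that a nonzero such $r$ exists precisely when $Q\preceq P$: given an injection $f$ witnessing (4) with $Q(n)=P(f(n))$ for $n\ge N$, take $r=\prod_{i<N}Q(i)$ (so that for $n>N$, $r/\prod_{i<n}Q(i)=1/\prod_{N\le i<n}P(f(i))\in\Z[1/P]$ by the injectivity of $f$); conversely, tracking denominators of $r$ in lowest terms over $\Q$ shows that from some index onward the primes $Q(n)$ must appear as primes of $P$ with matching multiplicities, producing the required injection.

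The main obstacle is (3) $\Rightarrow$ (1), because a priori a continuous map is not a homomorphism. Here I would appeal to the classical fact developed in \cite{Gu,AG,Pr} that any continuous map $h:\Sigma_P\to\Sigma_Q$ is homotopic to a map of the form $g\mapsto c\cdot\psi(g)$ for some $c\in\Sigma_Q$ and some continuous homomorphism $\psi:\Sigma_P\to\Sigma_Q$. If $h$ is surjective, then $h$ cannot be null-homotopic: a null-homotopic map from the connected space $\Sigma_P$ would have its image contained in a single arc-component of $\Sigma_Q$, and arc-components of $\Sigma_Q$ are proper subsets of $\Sigma_Q$ because $\Sigma_Q$ is not arcwise connected (as recalled just before the lemma). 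Consequently $\psi$ cannot be the zero homomorphism (for otherwise $g\mapsto c\cdot\psi(g)$ would be constant), so $\psi$ witnesses (1).
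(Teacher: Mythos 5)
Your proposal is correct, and it closes the cycle of implications along a genuinely different route from the paper on one key point. The paper handles the link to condition (4) by citing Prajs's theorem that a surjective continuous map $\Sigma_P\to\Sigma_Q$ exists iff $Q\preceq P$, i.e., it gets $(3)\Leftrightarrow(4)$ as a black box and then only needs $(3)\Rightarrow(1)$ (via Scheffer) to complete the loop. You instead prove $(1)\Leftrightarrow(4)$ directly by Pontryagin duality: identifying $\widehat{\Sigma_P}$ with $\Z[1/P]\subseteq\Q$, observing that a homomorphism $\Z[1/Q]\to\Z[1/P]$ is determined by $\phi(1)=r$ subject to $r/\prod_{i<n}Q(i)\in\Z[1/P]$ for all $n$, and translating this into the prime-multiplicity inequalities $t^Q(p)\le v_p(r)+t^P(p)$ that characterize $Q\preceq P$. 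This buys self-containedness (an elementary divisibility argument in place of a continuum-theoretic result about solenoids), at the cost of the denominator-tracking in the converse direction, which you state tersely but which does go through. Your $(1)\Rightarrow(2)$ is exactly the paper's argument, and your $(3)\Rightarrow(1)$ is the paper's argument as well --- both rest on the fact that a map homotopic to a constant has image in a single arc-component while $\Sigma_Q$ is not arcwise connected --- though the relevant homotopy result is Scheffer's theorem \cite{Sc} rather than the solenoid references you name, and the operative hypothesis there is the existence of the null-homotopy itself rather than the connectedness of $\Sigma_P$. These are citation-level quibbles, not gaps.
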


\begin{proof}
$(2)\Rightarrow(1)$ and $(2)\Rightarrow(3)$ are obvious. $(1)\Rightarrow(2)$ follows immediately from the fact that each nontrivial proper closed subgroup of a $P$-{adic solenoid} is totally disconnected. The equivalence of $(3)$ and $(4)$ follows from~\cite[Theorem 4.4]{Pr}.

It remains to show $(3)\Rightarrow(1)$. Let $h$ be a surjective continuous map from $\Sigma_P$ to $\Sigma_Q$.
Without loss of generality assume that $h(1_{\Sigma_P})=1_{\Sigma_Q}$. Then there exists a continuous homomorphism $f:\Sigma_P\rightarrow\Sigma_Q$ such that $h$ is homotopic to $f$~(cf. \cite[Corollary 2]{Sc}). Since $\Sigma_Q$ is not arcwise connected, $\ker(f)\ne\Sigma_P$.
\end{proof}

\begin{theorem}\label{solenoid}
Let $P$ and $Q$ be in $\mathcal{P}^\omega$. Then $E(\Sigma_P)\leq_B E(\Sigma_Q)$ iff $Q\preceq P$ iff there is a nonzero continuous homomorphism $f:\Sigma_P\rightarrow\Sigma_Q$.
\end{theorem}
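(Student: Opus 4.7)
The plan is to funnel everything through the Rigid Theorem (Theorem~\ref{cc}) combined with Lemma~\ref{equivalence}. The latter already gives the equivalence between $Q\preceq P$ and the existence of a nonzero continuous homomorphism $f:\Sigma_P\to\Sigma_Q$, so the only new content is the equivalence of these conditions with $E(\Sigma_P)\leq_B E(\Sigma_Q)$.

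For the forward direction, assuming $E(\Sigma_P)\leq_B E(\Sigma_Q)$, I would invoke Theorem~\ref{cc}, whose hypotheses are satisfied because $\Sigma_P$ is a compact connected abelian Polish group and $\Sigma_Q$ is locally compact abelian Polish. This produces a continuous homomorphism $S:\Sigma_P\to\Sigma_Q$ with $\ker(S)$ non-archimedean. The only point requiring attention is ruling out the zero map: if $S$ were trivial, then $\ker(S)=\Sigma_P$ would itself be non-archimedean, hence totally disconnected by the equivalence recalled just before Theorem~\ref{n-dim} (for locally compact abelian Polish groups, non-archimedean iff totally disconnected); this would contradict $\Sigma_P$ being compact, connected, and nontrivial. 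Hence $S$ is a nonzero continuous homomorphism, as required.

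For the backward direction, given a nonzero continuous homomorphism $f:\Sigma_P\to\Sigma_Q$, I would first argue that $\ker(f)$ is non-archimedean and then apply Theorem~\ref{cc} in the reverse direction. Since $f$ is nonzero, $\ker(f)$ is a proper closed subgroup of $\Sigma_P$; if it is trivial, non-archimedeanness is immediate, and otherwise it is totally disconnected (and hence non-archimedean) by Proposition~2.7 of~\cite{BK}, which is already recalled in the opening paragraph of this section. No serious obstacle arises in either direction: all the technical work has been packaged into the Rigid Theorem and Lemma~\ref{equivalence}, and the only delicate step is the exclusion of the zero map in the forward direction, which follows from the connectedness and nontriviality of $\Sigma_P$.
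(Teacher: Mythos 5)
Your proposal is correct and follows exactly the paper's route: the paper likewise observes that every nontrivial proper closed subgroup of a solenoid is non-archimedean and then cites Theorem~\ref{cc} and Lemma~\ref{equivalence}. You have merely spelled out the details the paper leaves implicit (in particular the exclusion of the zero map via connectedness), and these details are all sound.
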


\begin{proof}
Note that every nontrivial proper closed subgroup of $\Sigma_P$ is non-archimedean. Then this follows from Theorem~\ref{cc} and Lemma~\ref{equivalence}.
\end{proof}

Let ${\rm Fin}$ denote the set of all finite subsets of $\omega$. For $A,B\subseteq\omega$, we use $A\subseteq^* B$ to denote $A\setminus B\in{\rm Fin}$.

We prove that, for $n\in\N^+$, the partially ordered set $P(\omega)/{\rm Fin}$ can be embedded into Borel equivalence relations between $E(\R^n)$ and $E(\T^n)$.

\begin{lemma}\label{between}
Let $P$ be in $\mathcal{P}^\omega$. Then $E(\R)<_B E(\Sigma_P)<_B E(\T)$.
\end{lemma}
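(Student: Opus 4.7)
The plan is to obtain each of the four comparisons from the Rigid Theorem (Theorem~\ref{cc}) together with the structural facts about $\Sigma_P$ recorded above (compact, connected, $1$-dimensional, not locally connected, not arcwise connected, every nontrivial proper closed subgroup non-archimedean).

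First I would handle the two positive reductions. For $E(\R)\le_B E(\Sigma_P)$, note that $\Sigma_P$ is connected and nontrivial, so it is not non-archimedean, and Theorem~\ref{n-dim}.(2) applies directly. For $E(\Sigma_P)\le_B E(\T)$, I would exhibit the coordinate projection $S:\Sigma_P\to\T$, $S(g)=g(0)$, which is a continuous homomorphism. Its kernel consists of all $g\in\Sigma_P$ with $g(0)=1_\T$; compatibility $g(l)=g(l+1)^{P(l)}$ forces each $g(l)$ to be a root of unity, so $\ker(S)$ is a closed totally disconnected, hence non-archimedean, subgroup of $\Sigma_P$. Theorem~\ref{cc} then yields $E(\Sigma_P)\le_B E(\T)$.

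For strictness $E(\R)<_B E(\Sigma_P)$, suppose toward a contradiction that $E(\Sigma_P)\le_B E(\R)$. By Theorem~\ref{cc}, there is a continuous homomorphism $S:\Sigma_P\to\R$ with $\ker(S)$ non-archimedean. Since $\Sigma_P$ is compact, $S(\Sigma_P)$ is a compact subgroup of $\R$, so $S\equiv 0$, hence $\ker(S)=\Sigma_P$, which is connected and nontrivial and therefore not non-archimedean, a contradiction.

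The main obstacle, and the most interesting step, is strictness $E(\Sigma_P)<_B E(\T)$. Assuming $E(\T)\le_B E(\Sigma_P)$, Theorem~\ref{cc} yields a continuous homomorphism $S:\T\to\Sigma_P$ with $\ker(S)$ non-archimedean. The closed subgroups of $\T$ are $\T$ itself and the finite cyclic groups; since $\T$ is connected and nontrivial it is not non-archimedean, so $\ker(S)$ must be finite. Then $S(\T)\cong\T/\ker(S)\cong\T$ (via $S$, which is continuous and whose domain $\T$ is compact, so $S$ is a topological quotient onto its image). Thus $S(\T)$ is a nontrivial connected closed subgroup of $\Sigma_P$; since every proper closed subgroup of $\Sigma_P$ is totally disconnected, we conclude $S(\T)=\Sigma_P$. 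But $S(\T)$ is homeomorphic to $\T$, hence locally connected (equivalently, arcwise connected), whereas $\Sigma_P$ is neither locally connected nor arcwise connected. This contradiction finishes the proof.
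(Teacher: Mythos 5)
Your proof is correct and follows essentially the same route as the paper: the paper gets $E(\R)<_BE(\Sigma_P)\le_BE(\T)$ by citing Theorem~\ref{n-dim}.(5) and rules out $E(\T)\le_BE(\Sigma_P)$ via Theorem~\ref{n-dim}.(4), while you simply inline those citations by invoking Theorem~\ref{cc} directly and using the same structural facts about $\Sigma_P$ (compact subgroups of $\R$ are trivial; proper closed subgroups of $\Sigma_P$ are totally disconnected; $\Sigma_P$ is not arcwise connected).
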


\begin{proof}
By Theorem~\ref{n-dim}.(5), we have that $E(\R)<_B E(\Sigma_P)\leq_B E(\T)$.

Assume toward a contradiction that $E(\T)\leq_B E(\Sigma_P)$. From Theorem~\ref{n-dim}.(4), $\T$ embeds in $\Sigma_P$. This is impossible, since $\Sigma_P$ is not arcwise connected and every proper closed subgroup of $\Sigma_P$ is non-archimedean.
\end{proof}

For $P\in\mathcal{P}^\omega$ and $\gamma\in\mathcal{P}$, we define $t^{P}(\gamma)\in\omega\cup\{\omega\}$ as
$$t^{P}(\gamma)=\left\{\begin{array}{ll} \omega, & \exists^\infty j\in\omega\,(P(j)=\gamma),\cr \left|\{j:P(j)=\gamma\}\right|,
 & \mbox{otherwise.}\end{array}\right.$$
Given $P,Q\in\mathcal{P}^\omega$, denote
$$D(P,Q)=\{\gamma\in\mathcal{P}:t^P(\gamma)<t^Q(\gamma)\}.$$
From the definition of $Q\preceq P$, we can easily see that
$$E(\Sigma_P)\le_B E(\Sigma_Q)\iff Q\preceq P\iff\sum_{\gamma\in D(P,Q)}(t^Q(\gamma)-t^P(\gamma))\mbox{ is finite.}$$

\begin{lemma}\label{order}
Let $P,Q\in\mathcal{P}^\omega$ with $E(\Sigma_Q)\leq_B E(\Sigma_P)$.
Suppose that $D(P,Q)$ is infinite. Then for $A\subseteq\omega$, there is a group $\Sigma_{P_A}$ such that $E(\Sigma_Q)<_B E(\Sigma_{P_A})<_B E(\Sigma_P)$ and for $A,B\subseteq\omega$, we have
$$A\subseteq^*B\Longleftrightarrow E(\Sigma_{P_A})\leq_B E(\Sigma_{P_B}).$$
\end{lemma}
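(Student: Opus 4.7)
The plan is to produce, for each $A\subseteq\omega$, a sequence $P_A\in\mathcal{P}^\omega$ by inserting extra copies of primes from $D(P,Q)$ into $P$ in a pattern that codes $A$; all the required Borel reducibilities will then follow from Theorem~\ref{solenoid} once the corresponding $\preceq$-relations between the $P_A$'s, $P$, and $Q$ are verified. Note that the hypothesis $E(\Sigma_Q)\le_BE(\Sigma_P)$ gives $P\preceq Q$ via Theorem~\ref{solenoid}, which we will use freely.

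Concretely, I would enumerate $D(P,Q)=\{\delta_k:k\in\omega\}$ without repetition, partition $\omega=J_0\sqcup J_1\sqcup J_2$ into three disjoint infinite sets, and fix a bijection $\psi:J_2\to\omega$. For each $A\subseteq\omega$, let $P_A$ be any element of $\mathcal{P}^\omega$ whose multiplicity function satisfies
\[
t^{P_A}(\delta_k)=\begin{cases}
t^P(\delta_k)+1, & k\in J_0,\\
t^P(\delta_k), & k\in J_1,\\
t^P(\delta_k)+1, & k\in J_2\text{ and }\psi(k)\notin A,\\
t^P(\delta_k), & k\in J_2\text{ and }\psi(k)\in A,
\end{cases}
\]
and $t^{P_A}(p)=t^P(p)$ for all primes $p\notin D(P,Q)$. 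The three blocks do different jobs: $J_0$ forces $P_A\not\preceq P$, $J_1$ forces $Q\not\preceq P_A$, and $J_2$, indexed via $\psi$, stores the subset $A$.

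The crux of the verifications is the combinatorial reformulation $R\preceq S\iff\sum_p\max(0,t^R(p)-t^S(p))<\infty$; the forward direction counts occurrences on each fibre of the witnessing injection, while the converse is achieved by making the initial segment of $R$ long enough to absorb the (finite) total excess and then matching the rest prime by prime into $S$. Granting this, the five checks become immediate bookkeeping: $P\preceq P_A$ because $t^P\le t^{P_A}$; $P_A\preceq Q$ since on $D(P,Q)$ one has $t^{P_A}\le t^P+1\le t^Q$ and off $D(P,Q)$ the excess of $P_A$ over $Q$ coincides with that of $P$ over $Q$, which is finite by $P\preceq Q$; $P_A\not\preceq P$ because the $J_0$-block contributes infinitely many $1$'s to $\sum\max(0,t^{P_A}-t^P)$; $Q\not\preceq P_A$ because on $J_1$ each $t^Q(\delta_k)-t^{P_A}(\delta_k)\ge 1$; and finally $\sum\max(0,t^{P_B}-t^{P_A})$ vanishes outside $J_2$ and equals $|A\setminus B|$ on $J_2$, so $P_B\preceq P_A\iff A\subseteq^*B$. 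Feeding these back through Theorem~\ref{solenoid} produces both the chain $E(\Sigma_Q)<_BE(\Sigma_{P_A})<_BE(\Sigma_P)$ and the equivalence $A\subseteq^*B\iff E(\Sigma_{P_A})\le_BE(\Sigma_{P_B})$.

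The step I expect to require the most care is the $\preceq$-reformulation together with actually building the witnessing injections, since this means choosing the order in which the terms of $P_A$ are enumerated so that its initial segment accommodates all the finitely many excesses at once; this is not stated in the paper and needs a short combinatorial argument. Once that is in hand, everything else is routine.
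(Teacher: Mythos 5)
Your proposal is correct and follows essentially the same route as the paper: the paper likewise adds one extra copy of $d_k$ for $k$ in a fixed infinite set of indices (forcing $E(\Sigma_{P_A})<_BE(\Sigma_P)$), leaves a second infinite set untouched (forcing $E(\Sigma_Q)<_BE(\Sigma_{P_A})$), and codes $A$ on a third, only realizing $P_A$ concretely as an interleaving $P_A^*\oplus(P_0^*\oplus P)$ instead of via the multiplicity function. The multiplicity-excess characterization of $\preceq$ that you isolate is precisely what the paper's ``it is easy to see''/``it is trivial to check'' steps rely on, so making it explicit is a reasonable (and arguably more careful) presentation of the same argument.
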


\begin{proof}
Enumerate $D(P,Q)$ as $d_0<d_1<d_2<\cdots$. Let $P_0^*\in\mathcal{P}^\omega$ such that $P_0^*(i)=d_{3i}$ for all $i\in\omega$.

For $L,M\in\mathcal{P}^\omega$, we define an element $L\oplus M\in\mathcal{P}^\omega$ as
$$(L\oplus M)(n)=\left\{\begin{array}{ll}L(k), & n=2k,\cr M(k), & n=2k+1.\end{array}\right.$$
It is clear that
$$t^{L\oplus M}(\gamma)=\left\{\begin{array}{ll}\omega,& t^L(\gamma)=\omega\mbox{ or }t^M(\gamma)=\omega,\cr
t^L(\gamma)+t^M(\gamma),& \mbox{otherwise.}\end{array}\right.$$

Given a set $A\subseteq\omega$, define $P_A\in\mathcal{P}^\omega$ as follows. If $\omega\setminus A$ is finite, put $P_A=P_0^*\oplus P$. Then
$$t^{P_A}(\gamma)=\left\{\begin{array}{ll}t^P(\gamma)+1, & \gamma=d_{3i},i\in\omega,\cr t^P(\gamma), & \mbox{otherwise.}\end{array}\right.$$
If $\omega\setminus A$ is infinite, enumerate it as $a_0<a_1<a_2<\cdots$. Define $P_A^*\in\mathcal{P}^\omega$ as $P_A^*(j)=d_{1+3a_j}$ for $j\in\omega$, and put $P_A=P_A^*\oplus(P_0^*\oplus P)$. Then
$$t^{P_A}(\gamma)=\left\{\begin{array}{ll}t^P(\gamma)+1, & (\gamma=d_{3i},i\in\omega)\mbox{ or }(\gamma=d_{1+3a},a\in(\omega\setminus A)),\cr
t^P(\gamma), & \mbox{otherwise.}\end{array}\right.$$

Next we show that $E(\Sigma_Q)<_B E(\Sigma_{P_A})<_B E(\Sigma_P)$ for all $A\subseteq\omega$.

First, since $t^P(\gamma)\le t^{P_A}(\gamma)$ for all $\gamma\in\mathcal P$, we have $D(P_A,P)=\emptyset$. So $P\preceq P_A$, and hence $E(\Sigma_{P_A})\le_B E(\Sigma_P)$.

Since $E(\Sigma_Q)\leq_B E(\Sigma_P)$, by Theorem~\ref{solenoid}, we have $P\preceq Q$, and hence
$$\sum_{\gamma\in D(Q,P)}(t^P(\gamma)-t^Q(\gamma))\mbox{ is finite.}$$
Note that $t^{P_A}(\gamma)=t^P(\gamma)+1$ only occurs when $t^Q(\gamma)>t^P(\gamma)$ holds, in which case we always have $\gamma\notin D(Q,P_A)$. So we have $D(Q,P_A)=D(Q,P)$ and $t^{P_A}(\gamma)=t^P(\gamma)$ for all $\gamma\in D(Q,P_A)$. This gives $E(\Sigma_Q)\leq_B E(\Sigma_{P_A})$.

Since $d_{3i}\in D(P,P_A)$ for $i\in\omega$, $D(P,P_A)$ is infinite, so $E(\Sigma_P)\not\le_B E(\Sigma_{P_A})$. Similarly, since $t^{P_A}(d_{2+3i})=t^P(d_{2+3i})<t^Q(d_{2+3i})$, we have $d_{2+3i}\in D(P_A,Q)$ for $i\in\omega$, so $E(\Sigma_{P_A})\not\le_B E(\Sigma_Q)$.

Given $A,B\subseteq\omega$, note that $A\subseteq^* B$ iff $(\omega\setminus B)\setminus(\omega\setminus A)=(A\setminus B)$ is finite. We will check that $A\subseteq^* B$ iff $P_B\preceq P_A$. We consider four cases as follows. (1) If both $\omega\setminus A$ and $\omega\setminus B$ are finite, then we have $A\subseteq^* B$ and $P_A=P_B=P_0^*\oplus P$. (2) If $\omega\setminus A$ is infinite and $\omega\setminus B$ is finite, then we have $A\subseteq^* B$ and $P_B=P_0^*\oplus P\preceq P_A^*\oplus(P_0^*\oplus P)=P_A$, since $t^{P_B}(\gamma)\le t^{P_A}(\gamma)$ for all $\gamma\in\mathcal P$. (3) If $\omega\setminus A$ is finite and $\omega\setminus B$ is infinite, then $A\not\subseteq^* B$ and $P_B=P_B^*\oplus(P_0^*\oplus P)\not\preceq P_0^*\oplus P=P_A$, since $t^{P_A}(d_{1+3b})<t^{P_B}(d_{1+3b})$ for $b\in(\omega\setminus B)$. (4) If both $\omega\setminus A$ and $\omega\setminus B$ are infinite, then $t^{P_A}(\gamma)<t^{P_B}(\gamma)$ iff $\gamma=d_{1+3b}$ for some $b\in(\omega\setminus B)\setminus(\omega\setminus A)=(A\setminus B)$. Moreover, $t^{P_B}(d_{1+3b})=t^P(d_{1+3b})+1=t^{P_A}(d_{1+3b})+1$ for all $b\in(A\setminus B)$. So
$$\sum_{\gamma\in D(P_A,P_B)}(t^{P_B}(\gamma)-t^{P_A}(\gamma))=|A\setminus B|,$$
and hence $A\subseteq^* B$ iff $P_B\preceq P_A$.

Again by Theorem~\ref{solenoid}, we have $A\subseteq^* B$ iff  $E(\Sigma_{P_A})\le_B E(\Sigma_{P_B})$.
\end{proof}

\begin{theorem}
Let $n\in\mathbb{N}^+$. Then for $A\subseteq\omega$, there is an $n$-dimensional compact connected abelian Polish group $G_A$ such that $E(\R^n)<_B E(G_A)<_B E(\T^n)$ and for $A,B\subseteq\omega$, we have
$$A\subseteq^*B\Longleftrightarrow E(G_A)\leq_B E(G_B).$$
\end{theorem}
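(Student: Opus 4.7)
The plan is to reduce everything to the $1$-dimensional solenoid case by setting $G_A=\Sigma_{P_A}\times\T^{n-1}$, with the $\Sigma_{P_A}$ produced by Lemma~\ref{order}. First I would exhibit a concrete pair $(P,Q)$ satisfying the hypotheses of Lemma~\ref{order}: take $P$ to be the constant sequence with value $2$, and $Q$ any enumeration of $\mathcal{P}$ in which every prime appears infinitely often. Then $P\preceq Q$ is immediate (so $E(\Sigma_Q)\leq_B E(\Sigma_P)$ by Theorem~\ref{solenoid}), while $t^P(\gamma_i)=0<\omega=t^Q(\gamma_i)$ for every prime $\gamma_i\neq 2$, so $D(P,Q)=\mathcal{P}\setminus\{2\}$ is infinite. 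Lemma~\ref{order} then furnishes a family $\{\Sigma_{P_A}\}_{A\subseteq\omega}$ of solenoids satisfying $A\subseteq^*B\Leftrightarrow E(\Sigma_{P_A})\leq_B E(\Sigma_{P_B})$.

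Setting $G_A=\Sigma_{P_A}\times\T^{n-1}$, the group $G_A$ is a compact connected abelian Polish group of dimension $\dim(\Sigma_{P_A})+(n-1)=n$, so Theorem~\ref{n-dim}(5) instantly yields $E(\R^n)<_B E(G_A)\leq_B E(\T^n)$. For the strict inequality $E(G_A)<_B E(\T^n)$ I would argue by contradiction: if $E(\T^n)\leq_B E(G_A)$, Lemma~\ref{product} (with $G_1=\cdots=G_n=\T$ and $H_1=\Sigma_{P_A}$, $H_j=\T$ for $j\geq 2$) would deliver a bijection $\theta^*$ of $\{1,\ldots,n\}$ with $E(\T)\leq_B E(H_{\theta^*(i)})$ for all $i$; but Lemma~\ref{between} forbids $E(\T)\leq_B E(\Sigma_{P_A})$, forcing $\theta^*(i)\neq 1$ for every $i$ and contradicting bijectivity.

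The main equivalence $A\subseteq^*B\Leftrightarrow E(G_A)\leq_B E(G_B)$ splits into two parts. The forward direction is a product-of-reductions argument: using $E(G\times H)\sim_B E(G)\times E(H)$ together with the reduction $E(\Sigma_{P_A})\leq_B E(\Sigma_{P_B})$ supplied by Lemma~\ref{order}, pairing with the identity on $\T^{n-1}$ gives $E(G_A)\leq_B E(G_B)$. For the converse, I would apply Lemma~\ref{product} to the assumed reduction $E(G_A)\leq_B E(G_B)$, with $G_1=\Sigma_{P_A}$, $H_1=\Sigma_{P_B}$, and the remaining factors equal to $\T$; this yields a bijection $\theta^*$ of $\{1,\ldots,n\}$ with $E(G_i)\leq_B E(H_{\theta^*(i)})$. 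For each $i\geq 2$, the constraint $E(\T)\leq_B E(H_{\theta^*(i)})$ combined with Lemma~\ref{between} excludes $\theta^*(i)=1$, forcing $\theta^*(1)=1$ and hence $E(\Sigma_{P_A})\leq_B E(\Sigma_{P_B})$; one more invocation of Lemma~\ref{order} then delivers $A\subseteq^*B$. No step looks especially hard once Lemmas~\ref{order} and~\ref{product} are in hand; the only subtle point is verifying that the hypotheses of Lemma~\ref{order} are actually realizable, which is precisely why the explicit choice of $P$ and $Q$ is required at the outset.
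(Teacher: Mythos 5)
Your proposal is correct and follows essentially the same route as the paper, whose proof is just a citation of Theorem~\ref{n-dim}.(5) and Lemmas~\ref{between}, \ref{product}, and \ref{order}; you have merely made explicit the choice $G_A=\Sigma_{P_A}\times\T^{n-1}$, a concrete pair $(P,Q)$ realizing the hypotheses of Lemma~\ref{order}, and the Hall-type bookkeeping with $\theta^*$ that the paper leaves to the reader. All of these details check out.
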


\begin{proof}
It follows from Theorem~\ref{n-dim}.(5), lemmas~\ref{product}, \ref{between}, and \ref{order}.
\end{proof}

\section{Dual groups}

Let $G$ and $H$ be two abelian topological groups. Denote the class of all continuous homomorphisms of $G$ to $H$ by ${\rm Hom}(G,H)$, which is an abelian group under pointwise addition. We always equip Hom$(G,H)$ with compact-open topology. The abelian topological group Hom$(G,\T)$ is called the {\it dual group} of $G$, denoted by $\widehat{G}$~(cf.~\cite[Definition 7.4]{HM13}).

Let $(A,+)$ be an abelian group whose identity element denoted by $0_A$. We say that $(A,+)$ is a {\it torsion group} if each element of $A$ is finite order. We say that $(A,+)$ is {\it torsion-free} if $n\cdot g\neq 0_A$ for all $g\in A$ with $g\neq 0_A$ and $n\in\N^+$. A subset $X$ of $A$ is {\it free} if any equation $\sum_{x\in X} n_x\cdot x=0_A$ implies $n_x=0$ for all $x\in X$. The {\it torsion-free rank} of $A$, written rank$(A)$, is the cardinal number~(uniquely determined) of any maximal free subset of $A$.

Each Hausdorff locally compact abelian group $G$ is reflexive, thus it is topologically isomorphic to the double dual group $\widehat{\widehat{G}}$~(cf.~\cite[Theorem 7.63]{HM13}). A Hausdorff locally compact abelian group is compact and metrizable iff its dual group is a countable discrete group~(cf. Proposition 7.5.(i) and Theorem 8.45 of~\cite{HM13}). Let $G$ be a Hausdorff compact abelian group, then $G$ is connected iff $\widehat{G}$ is torsion-free; and $G$ is totally disconnected iff $\widehat G$ is torsion~(cf.~\cite[Corollary 8.5]{HM13}). For any finite dimensional compact abelian Polish group $G$, the covering dimension of $A$ is equal to rank$(\widehat{G})$~(cf. Lemma 8.13 and Corollary 8.26 of~\cite{HM13}).

If $H$ is a subset of an abelian topological group $G$, then the subgroup
$$H^\perp=\{\gamma\in\widehat{G}:\forall x\in H\,(\gamma(x)=1_\T)\}$$
is called the \emph{annihilator} of $H$ in $\widehat{G}$ (cf.~\cite[Definition 7.12]{HM13}).

Now we focus on compact connected abelian Polish groups.

\begin{theorem}[Dual Rigid Theorem]\label{dual rigid}
Let $G$ be a compact connected abelian Polish group and $H$ a locally compact abelian Polish group. Then $E(G)\leq_B E(H)$ iff there is a continuous homomorphism $S^*:\widehat{H}\to\widehat{G}$ such that $\widehat{G}/{\rm im}(S^*)$ is a torsion group.
\end{theorem}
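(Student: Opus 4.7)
The plan is to deduce Theorem~\ref{dual rigid} from the Rigid Theorem (Theorem~\ref{cc}) by Pontryagin duality. By Theorem~\ref{cc}, it suffices to show that, for $G$ a compact connected abelian Polish group and $H$ a locally compact abelian Polish group, the existence of a continuous homomorphism $S:G\to H$ with $\ker(S)$ non-archimedean is equivalent to the existence of a continuous homomorphism $S^*:\widehat{H}\to\widehat{G}$ with $\widehat{G}/{\rm im}(S^*)$ torsion. Since both $G$ and $H$ are reflexive, the dualization $S\mapsto S^*$ defined by $S^*(\gamma)=\gamma\circ S$ is a bijection between continuous homomorphisms $G\to H$ and continuous homomorphisms $\widehat{H}\to\widehat{G}$, so I only have to match the two conditions under this correspondence.

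The first key observation is that since $G$ is compact and metrizable, $\widehat{G}$ is a countable discrete abelian group (Proposition~7.5.(i) and Theorem~8.45 of~\cite{HM13}); in particular every subgroup of $\widehat{G}$ is automatically closed. I then invoke the annihilator--image formula $\overline{{\rm im}(S^*)}=\ker(S)^\perp$ for dual homomorphisms of locally compact abelian groups. Because $\widehat{G}$ is discrete the closure is unnecessary, so ${\rm im}(S^*)=\ker(S)^\perp$. Combining this with the annihilator--quotient duality $\widehat{G}/A^\perp\cong\widehat{A}$ applied to the closed subgroup $A=\ker(S)\subseteq G$, I obtain the isomorphism $\widehat{G}/{\rm im}(S^*)\cong\widehat{\ker(S)}$.

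To finish I need to see that $\ker(S)$ is non-archimedean iff $\widehat{\ker(S)}$ is torsion. Since $G$ is compact, $\ker(S)$ is a closed, hence compact, subgroup of $G$. By~\cite[Theorem 1.34]{HM13} a locally compact abelian Polish group is non-archimedean iff it is totally disconnected, and by~\cite[Corollary 8.5]{HM13} a compact abelian group is totally disconnected iff its dual is a torsion group. Chaining these equivalences yields $\ker(S)$ non-archimedean iff $\widehat{\ker(S)}\cong\widehat{G}/{\rm im}(S^*)$ is torsion, which is exactly the desired condition.

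I expect the main technical point to be the identification ${\rm im}(S^*)=\ker(S)^\perp$. This is where the hypothesis that $G$ is compact (equivalently, that $\widehat{G}$ is discrete) does essential work, since in general one only has $\overline{{\rm im}(S^*)}=\ker(S)^\perp$ and the image can fail to be closed; on the algebraic side, one must also know that every character of the compact closed subgroup $\ker(S)\subseteq G$ extends to a character of $H$, which is the standard character extension theorem for locally compact abelian groups. Once this identity is in hand, the rest is a direct chain of classical Pontryagin duality facts, all of which are recorded at the start of Section~4.
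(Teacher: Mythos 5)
Your proposal is correct and follows essentially the same route as the paper: apply the Rigid Theorem (Theorem~\ref{cc}), dualize $S$ to $S^*$, identify $\widehat{G}/{\rm im}(S^*)\cong\widehat{\ker(S)}$ via annihilators (using that $\widehat G$ is countable discrete so ${\rm im}(S^*)$ is closed), and invoke the duality between totally disconnected compact groups and discrete torsion groups; the paper merely phrases the annihilator step as $\ker(S)\cong({\rm im}(S^*))^\perp$ inside $G$ rather than ${\rm im}(S^*)=\ker(S)^\perp$ inside $\widehat G$, and treats the two directions separately. The only blemish is your remark that characters of $\ker(S)\subseteq G$ extend to characters of $H$ --- the extension actually needed is from $\ker(S)$ to $G$ (for the quotient isomorphism) and from the compact subgroup $S(G)$ to $H$ (for the image--annihilator identity) --- but this does not affect the argument.
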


\begin{proof}
$(\Rightarrow)$. We assume that $E(G)\leq_B E(H)$. By Theorem~\ref{lca}, there is a continuous homomorphism $S:G\rightarrow H$ such that $\ker(S)$ is non-archimedean. This implies that there is a homomorphism $S^*$ from $\widehat{H}$ to $\widehat{G}$ such that $\ker(S)\cong {{\rm im}(S^*)}^\perp$  ~(cf.~\cite[P.22 and P.23(a)]{Ar}). By~\cite[Lemma 7.13(\romannumeral2)]{HM13}, we have that $\ker(S)\cong(\widehat{G}/{\rm im}(S^*))^{\widehat{}}$, and hence $\widehat{\ker(S)}\cong\widehat{G}/{\rm im}(S^*)$. Since $\ker(S)$ is non-archimedean, thus is totally disconnected, so $\widehat{G}/{\rm im}(S^*)$ is a torsion group.

$(\Leftarrow)$. Since $G\cong{\widehat{\widehat{G}}}$ and $H\cong\widehat{\widehat{H}}$, we can define $S:G\to H$ via $(S^*)^*:{\widehat{\widehat G}}\to{\widehat{\widehat H}}$ (cf.~\cite[(24.41)]{HAR}). Then the similar arguments as the preceding paragraph give the desired result.
\end{proof}

\begin{corollary}\label{dual group}
Let $G$ be a compact connected abelian Polish group and $H$ a locally compact abelian Polish group. If $E(G)\leq_B E(H)$, then there is a nonzero continuous homomorphism $S^*:\widehat{H}\to\widehat{G}$.
\end{corollary}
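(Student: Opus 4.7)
The plan is to derive this as an immediate consequence of the Dual Rigid Theorem (Theorem~\ref{dual rigid}), the only real content being that the homomorphism it produces cannot be identically zero (for nontrivial $G$). So I would first invoke Theorem~\ref{dual rigid} on the hypothesis $E(G)\le_B E(H)$ to obtain a continuous homomorphism $S^*:\widehat{H}\to\widehat{G}$ with the property that $\widehat{G}/{\rm im}(S^*)$ is a torsion group. It then suffices to rule out $S^*\equiv 0$.

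Next, I would invoke the structural dictionary recalled at the start of Section~4: since $G$ is a compact connected abelian Polish group, its dual $\widehat{G}$ is torsion-free (cf.~\cite[Corollary 8.5]{HM13}). This is the one external ingredient beyond Theorem~\ref{dual rigid} that is needed.

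Now suppose, for contradiction, that $S^*$ is the zero homomorphism. Then ${\rm im}(S^*)=\{0_{\widehat{G}}\}$, so $\widehat{G}/{\rm im}(S^*)\cong\widehat{G}$; the conclusion of Theorem~\ref{dual rigid} then says $\widehat{G}$ itself is a torsion group. Combined with the torsion-free property above, this forces $\widehat{G}=\{0_{\widehat{G}}\}$, and hence $G\cong\widehat{\widehat{G}}$ is trivial by Pontryagin reflexivity. Assuming $G$ is nontrivial (otherwise the statement is vacuous, as no nonzero map into a trivial group exists at all), we have a contradiction, so $S^*$ is nonzero.

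There is no real obstacle here; the corollary is a packaging of Theorem~\ref{dual rigid} together with the observation that the dual of a nontrivial compact connected abelian group, being a nonzero torsion-free abelian group, cannot coincide with one of its torsion quotients. The only delicate point is the trivial case of $G$, which must be tacitly excluded from the statement.
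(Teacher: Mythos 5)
Your proposal is correct and follows essentially the same route as the paper: apply Theorem~\ref{dual rigid} and observe that if $S^*$ were zero then $\widehat{G}\cong\widehat{G}/{\rm im}(S^*)$ would be torsion, contradicting that $\widehat{G}$ is torsion-free and nonzero (the paper compresses this to ``$\widehat{G}$ is non-torsion''). Your explicit handling of the trivial-$G$ case is a minor added care, not a different argument.
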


\begin{proof}
It follows from Theorem~\ref{dual rigid} and that $\widehat{G}$ is non-torsion.
\end{proof}

\begin{example}
$\widehat{\mathbb{T}}\cong \Z$~(cf.~\cite[Examples 23.27(a)]{HAR}). Fix a $P\in\mathcal{P}^\omega$, then $\widehat{\Sigma_P}\cong\left\{\frac{m}{P(0)P(1)\dots P(n)}:m\in\Z,n\in\N\right\}$~(see~\cite[25.3]{HAR}). In view of Corollary~\ref{dual group}, we get $E(\T)\nleq_B E(\Sigma_P)$ again.
\end{example}

Recall that $\widehat{\Q}\cong S_{(2,3,4,5,6,\dots)}$~(see~\cite[25.4]{HAR}). We have the following.

\begin{corollary}
Let $G$ be a $n$-dimensional compact abelian Polish group with $n\in\N^+$. Then $E((\widehat{\Q})^n)\leq_B E(G)$.
\end{corollary}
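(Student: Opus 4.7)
My plan is to invoke the Dual Rigid Theorem (Theorem~\ref{dual rigid}) with $(\widehat{\Q})^n$ playing the role of the compact connected group. This is legitimate because $\widehat{\Q}\cong S_{(2,3,4,5,\dots)}$ is a solenoid, hence $(\widehat{\Q})^n$ is a compact connected abelian Polish group. By Pontryagin duality and its compatibility with finite products, $\widehat{(\widehat{\Q})^n}\cong(\widehat{\widehat{\Q}})^n\cong\Q^n$; since $(\widehat{\Q})^n$ is compact and metrizable, this dual carries the discrete topology. The task therefore reduces to exhibiting a continuous homomorphism $S^*:\widehat{G}\to\Q^n$ whose cokernel $\Q^n/{\rm im}(S^*)$ is a torsion group.

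To construct $S^*$, I would use that $\widehat{G}$ is countable and discrete (since $G$ is compact metrizable abelian) with torsion-free rank equal to $\dim(G)=n$, as noted in the preamble of this section. Letting $T$ denote the torsion subgroup of $\widehat{G}$, the quotient $\widehat{G}/T$ is torsion-free of rank $n$ and embeds canonically into its rationalization $(\widehat{G}/T)\otimes_\Z\Q$, which is an $n$-dimensional $\Q$-vector space that I identify with $\Q^n$ by choosing a basis. Define $S^*$ to be the composition $\widehat{G}\twoheadrightarrow\widehat{G}/T\hookrightarrow\Q^n$; as a homomorphism between two discrete groups it is automatically continuous.

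The remaining verification is that $\Q^n/{\rm im}(S^*)$ is torsion, and this follows by clearing denominators in the rationalization: any $v\in(\widehat{G}/T)\otimes\Q$ satisfies $mv\in\widehat{G}/T$ for a suitable positive integer $m$, so $mv\in{\rm im}(S^*)$ and $v$ represents a torsion class. Applying Theorem~\ref{dual rigid} then yields $E((\widehat{\Q})^n)\leq_B E(G)$. I do not anticipate a serious obstacle here; the main conceptual step is recognizing that the $n$-dimensional hypothesis on $G$ supplies precisely the torsion-free rank required to target $\Q^n$, after which the rationalization construction is essentially forced.
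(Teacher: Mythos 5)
Your proof is correct, but it runs through the Dual Rigid Theorem (Theorem~\ref{dual rigid}) rather than the primal one, and this is a genuinely different route from the paper's. The paper quotes the resolution theorem \cite[Theorem 8.22.(4)]{HM13} to write $G_0\cong(\widehat{\Q})^n/\Delta$ with $\Delta$ compact totally disconnected (hence non-archimedean), and then applies Theorem~\ref{cc} to the quotient homomorphism $(\widehat{\Q})^n\to G_0$. You instead work entirely on the dual side: using $\widehat{(\widehat{\Q})^n}\cong\Q^n$ discrete and $\mathrm{rank}(\widehat{G})=\dim(G)=n$, you build $S^*:\widehat{G}\twoheadrightarrow\widehat{G}/T\hookrightarrow(\widehat{G}/T)\otimes_\Z\Q\cong\Q^n$ and verify the cokernel is torsion by clearing denominators. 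The two arguments are Pontryagin-dual to one another --- your $\widehat{G}/T$ is exactly $\widehat{G_0}$, and your embedding with torsion cokernel is the dual of the paper's quotient with totally disconnected kernel --- but yours replaces the citation of a nontrivial structure theorem for compact groups by an elementary piece of abelian group theory (rationalization of a finite-rank torsion-free group), at the cost of routing through the Dual Rigid Theorem, whose backward direction the paper only sketches. Both are complete; your verification that $\Q^n/\mathrm{im}(S^*)$ is torsion and that continuity is automatic for discrete duals is sound, and the hypotheses of Theorem~\ref{dual rigid} (with $(\widehat{\Q})^n$ as the compact connected group and $G$ as the locally compact one) are satisfied.
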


\begin{proof}
By~\cite[Theorem 8.22.(4)]{HM13}, $G_0\cong(\widehat{\Q})^n/\Delta$, where $\Delta$ is a compact totally disconnected subgroup of $(\widehat{\Q})^n$. Again by Theorem~\ref{cc}, this means that $E((\widehat{\Q})^n)\leq_B E(G_0)$, and thus $E((\widehat{\Q})^n)\leq_B E(G)$.
\end{proof}

From the arguments above, if $\Gamma$ is a countable discrete torsion-free abelian group, then $\widehat{\Gamma}$ is a compact connected abelian Polish group.

\begin{remark}
Let $G$ be a compact connected Polish group with $E(\R^n)\le_BE(G)\le_BE(\T^n)$ for some $n>0$. By Theorem~\ref{n-dim}.(5), $\dim(G)=n$, so ${\rm rank}(\widehat G)=n$. Thus $\widehat G$ is isomorphic to a subgroup of $\mathbb Q^n$ (cf.~\cite[Exercise 13.4.3]{gao}). In particular, if $n=1$, we have either $G\cong\T$ or there exists a $P\in\mathcal P^\omega$ such that $G\cong\Sigma_P$.
\end{remark}
 The following proposition shows that, if $n>1$, the structure of $G$ can be more complicated.

\begin{proposition}
There is a $2$-dimensional compact connected Polish group $G$ such that $E(G)\nleq_B E(\Sigma_{P_0}\times\Sigma_{P_1}\times\dots\times\Sigma_{P_n})$ for $n\in\N$ and each $P_i\in\mathcal{P}^\omega$. Moreover, if $\left|\{i\in\omega:P(i)=2\}\right|<\infty$, then $E(\Sigma_{P})\nleq_B E(G)$.

\end{proposition}
\begin{proof}
Pontryagin has constructed a countable torsion-free abelian group $\Gamma\subseteq\Q^2$ whose rank is two~(cf.~\cite[Example 2]{Pon}). Then $\widehat{\Gamma}$ is a $2$-dimensional compact connected abelian Polish group. The group $\Gamma$ defined by its generators $\eta,\xi_i,(i=0,1,2\dots)$ and relations,
$$ 2^{k_{i+1}}\xi_{i+1}=\xi_i+\eta,\leqno{(**)}$$
where $i\in\omega$ and $k_i\in\N^+$ such that $\sup\{k_i:i\in\omega\}=\infty$.

Put $G=\widehat{\Gamma}$. We claim that $E(G)\nleq_B E(\Sigma_{P_0}\times\Sigma_{P_1}\times\dots\times\Sigma_{P_n})$. Otherwise, by Corollary~\ref{dual group} and~\cite[Theorem 23.18]{HAR}, there exists $i\leq n$ such that there is a nonzero continuous homomorphism $f$ from $\widehat{\Sigma_{P_i}}$ to $\widehat{G}$. Note that for any $a\in$ $\widehat{\Sigma_{P_i}}$, there are infinitely many positive integers $n$ such that the equation $nx=a$ has a solution. But any element in $\Gamma$ does not admit such property. This implies that $f(\widehat{\Sigma_{P_i}})=\{1_\Gamma\}$ contradicting that $f$ is a nonzero homomorphism.

Now assume that $E(\Sigma_{P})\leq_B E(G)$ for some $P\in\mathcal P^\omega$. We show that $\{i\in\omega:P(i)=2\}$ is infinite. By Corollary~\ref{dual group}, there is a nonzero homomorphism $f$ from $\widehat{G}$ to $\widehat{\Sigma_{P}}$. Without loss of generality we may assume $\widehat{G}=\Gamma$ and $\widehat{\Sigma_P}=\left\{\frac{m}{P(0)P(1)\dots P(n)}:m\in\Z,n\in\N\right\}\subseteq\Q$. From $(**)$, a straightforward calculation shows that
$$2^{k_1+k_2+\dots+k_i}\xi_i=\xi_0+\eta(1+2^{k_1}+2^{k_1+k_2}+\dots+2^{k_1+k_2+\dots+k_{i-1}}).$$
So we have
$$2^{k_1+k_2+\dots+k_i}f(\xi_i)=f(\xi_0)+f(\eta)(1+2^{k_1}+2^{k_1+k_2}+\dots+2^{k_1+k_2+\dots+k_{i-1}}).$$
Note that $\lim_i2^{-(k_1+k_2+\dots+k_i)}f(\xi_0)=0$ and
$$\frac{1+2^{k_1}+2^{k_1+k_2}+\dots+2^{k_1+k_2+\dots+k_{i-1}}}{2^{k_1+k_2+\dots+k_i}}f(\eta)\le\frac{f(\eta)}{2^{k_i-1}}\to 0\quad(i\to\infty).$$
This implies that $\lim_if(\xi_i)=0$.

Let $f(\xi_0)=a/b$ and $f(\eta)=c/d$ for some integers $a,b,c,d$ with $c,d>0$. Note that $2^{k_{i+1}}f(\xi_{i+1})=f(\xi_i)+f(\eta)$. Since $f$ is a nonzero homomorphism, there can be at most one $f(\xi_i)=0$. For large enough $i$, we have $f(\xi_i)\ne 0$. So there exist integers $m_i,m'_i,c',d',l_i$ with $m_i,m'_i\ne 0$ and $c',d'>0$ such that
$$f(\xi_i)=\frac{m_i}{2^{k_1+k_2+\dots+k_i}cd}=\frac{m'_i}{2^{l_i}c'd'},$$
where $m'_i$ and $2^{l_i}c'd'$ are coprime and $c'|c$, $d'|d$. It follows that
$$|f(\xi_i)|\ge\frac{1}{2^{l_i}c'd'}\ge\frac{1}{2^{l_i}cd}\to 0\quad(i\to\infty).$$
So $l_i\to\infty$ as $i\to\infty$, and hence $\{i\in\omega:P(i)=2\}$ is infinite.
\end{proof}

\subsection*{Acknowledgements}
The authors would like to thank the anonymous referee for helpful comments and suggestions.


\begin{thebibliography}{3}

\bibitem{Ar} D.L. Armacost, The Structure of Locally Compact Abelian Groups, Monographs and Textbooks in Pure and Applied Mathematics, vol. 68, Marcel Dekker, Inc., New York, 1981.

\bibitem{BDHHMU} A. Bella, A. Dow, K.P. Hart, M. Hrusak, J. van Mill, P. Ursino, \textit{Embeddings into $P(\mathbb N)/{\rm fin}$ and extension of automorphism}, Fund. Math. 174 (2002) 271--284.

\bibitem{BM} J.A. Bondy, U.S.R. Murty, Graph Theory, Graduate Texts in Mathematics, vol. 244, Springer-Verlag, 2008.

\bibitem{ding12} L. Ding, \textit{Borel reducibility and H\" older$(\alpha)$ embeddability between Banach spaces}, J. Symb. Logic 77 (2012) 224--244.

\bibitem{DZ} L. Ding, Y. Zheng, \textit{On equivalence relations induced by Polish groups}, {\tt arXiv:2204.04594}, 2022.

\bibitem{RE} R. Engelking, Dimension Theory, North Holland, 1978.


\bibitem{gao} S. Gao, Invariant Descriptive Set Theory, Monographs and Textbooks in Pure and Applied Mathematics, vol. 293, CRC Press, 2009.

\bibitem{Gu} R.N. Gumerov, \textit{On finite-sheeted covering mappings onto solenoids}, Proc. Amer. Math. Soc. 133 (2005) 2771--2778.

\bibitem{AG} A. Gutek, \textit{Solenoids and homeomorphisms on the Cantor set}, Annales Societatis Mahtematicae Polonae, Series I: Commentationes
    Mathematicae XXI (1979) 299--302.

\bibitem{HAR} E. Hewitt, K. A. Ross, Abstract harmonic analysis I, Springer-Verlag, 1963.

\bibitem{HM13} K.H. Hofmann, S.A. Morris, The Structure of Compact Groups, De Gruyter Studies in Mathematics, vol. 25, De Gruyter, 2013.

\bibitem{BK} B. Kadri, \textit{Characterization of locally compact groups by closed totally disconnected subgroups}, Monatsh. Math. (2022).

\bibitem{KEchris} A.S. Kechris, Classical Descriptive Set Theory, Graduate Texts in Mathematics, vol. 156, Springer-Verlag, 1995.

\bibitem{LV} A. Louveau, B. Velickovic, \textit{A note on Borel equivalence relations}, P. Amer. Soc. 120 (1994) 255--259.

\bibitem{Pon} L. Pontrjagin, \textit{The theory of topological commutative groups}, Ann. of Math. 35 (1934) 361--388.

\bibitem{Pr} J.R. Prajs, \textit{Mutual aposyndesis and products of solenoids}, Topology Proc. 32 (2008) 339--349.

\bibitem{Sc} W. Scheffer, \textit{Maps between topological groups that are homotopic to homomorphisms}, Proc. Amer. Math. Soc. 33 (1972), 562--567.

\bibitem{yin} Z. Yin, \textit{Embeddings of $P(\omega)/{\rm Fin}$ into Borel equivalence relations between $\ell_p$ and $\ell_q$}, J. Symb. Logic 80 (2015), 917--939.


\end{thebibliography}
\end{document}